\newtheorem{theorem}{Theorem}[section]
\newtheorem{lemma}[theorem]{Lemma}
\newtheorem{remark}[theorem]{Remark}
\numberwithin{equation}{section}
\newenvironment{proof}[1][Proof]{\noindent\textbf{#1.} }{\hfill $\Box$}
 \makeatletter\setlength{\textwidth}{15.0cm}
\begin{document}
\author{\small Yunbo Wang$^a$,\ \  Xiaoyu Zeng$^b$,\ \  Huan-Song Zhou$^b$ $\thanks{Corresponding
author.}$
 \thanks{This work is supported by NSFC (Grant Nos. 11931012, 11871387 and 11871395), and partically supported  by NSF of Hubei Province (2019CFB562).
  \newline \small Email: \texttt{wangyunbo@stumail.nwu.edu.cn}(Y.B. Wang); \texttt{xyzeng@whut.edu.cn}(X.Y. Zeng); \texttt{hszhou@whut.edu.cn}(H.-S. Zhou).}\\
\small  $^a$School of Mathematics and Center for Nonlinear Studies
\\
     \small Northwest University Xi'an 710127, China\\
 \small $^b$Center for Mathematical Sciences, Wuhan University of Technology, Wuhan 430070, China
}
\title{\textbf{\large Asymptotic behavior of least energy solutions for a fractional Laplacian eigenvalue problem on $\mathbb{R}^{N}$
  }}
  \date{}
  \maketitle
\begin{abstract}

  We are interested in the existence and asymptotical behavior for the least energy solutions of the following fractional eigenvalue problem
  \begin{equation*}
  (P)\quad (-\Delta)^{s}u+V(x)u=\mu u+am(x)|u|^{\frac{4s}{N}}u,\quad \int_{\mathbb{R}^{N}}|u|^{2}dx=1,\ u\in H^{s}(\mathbb{R}^{N}),
  \end{equation*}
  where $s\in(0,1)$, $\mu\in\mathbb{R}$, $a>0$, $V(x)$ and $m(x)$ are $L^{\infty}(\mathbb{R}^{N})$ functions with $N\geq2$. We prove that there is a threshold $a_s^{*}>0$ such  that problem $(P)$ has a least energy solution $u_{a}(x)$ for each $a\in(0,a_s^{*})$ and $u_{a}$ blows up, as $a\nearrow a_s^{*}$, at some point $x_0 \in \mathbb{R}^N$,  which makes $V(x_0)$ be the minimum and $m(x_0)$ be the maximum. Moreover, the precise blowup rates for $u_a$  are obtained under suitable conditions on $V(x)$ and $m(x)$.
  \\ \hspace*{\fill} \\
  \noindent \textbf{Keyword}: Fractional Laplacian;  Eigenvalue problem; Constrained variational problem; Energy estimates.\\
  \noindent \textbf{MSC2020}: 35J60; 35P30; 35B40;49R05.\\
\end{abstract}

\section{Introduction}
\noindent

In this paper, we consider the following fractional Laplacian eigenvalue problem
\begin{equation}\label{eq1.1}
  (-\Delta)^{s}u+V(x)u={ \mu} u+am(x)|u|^{p}u,\quad  u\in H^{s}(\mathbb{R}^{N})
\end{equation}
where $s\in(0,1)$, ${ \mu}\in\mathbb{R}$ and $a>0$ are parameters, $N\geq2$,  $p=\frac{4s}{N}$ is the so-called  mass critical exponent, $V(x)$ and $m(x)$ are in $L^{\infty}(\mathbb{R}^{N})$. The fractional Laplace operator $(-\Delta)^{s}$ can be defined as follows
  \begin{equation*}
    (-\Delta)^{s}f(x)=C_{N,s}P.V.\int_{\mathbb{R}^{N}}\frac{f(x)-f(y)}{|x-y|^{N+2s}}dy
    =C_{N,s}\lim_{\epsilon\rightarrow0}\int_{\mathbb{R}^{N}\backslash B_{\epsilon}(0)}\frac{f(x)-f(y)}{|x-y|^{N+2s}}dy,
  \end{equation*}
for any function $f(x)$  in the Schwartz space $\mathcal{S}$.
It is known that $(-\Delta)^{s}$ gives the standard Laplace operator $-\Delta$ if $s=1$, see,  e.g., \cite[Proposition 4.4]{DNE}.

Problem \eqref{eq1.1} arises in studying the standing wave solutions for the following fractional Schr\"{o}dinger equation
\begin{equation}\label{eq1.2}
  -i\phi_{t}+(-\Delta)^{s}\phi+V(x)\phi=am(x)|\phi|^{p}\phi, \ (t,x)\in\mathbb{R}^+\times\mathbb{R}^{N},
\end{equation}
the background for this equation we refer the reader to the papers \cite{NL3,NL4}, etc.

It is easy to see that to seek a standing wave solution for \eqref{eq1.2}, that is, a solution of the form $\phi(t,x)=u(x)e^{-i\mu t}$, we need only to get a solution $u(x)$ of \eqref{eq1.1} in a suitable function space. When $s=1$ and $N=2$, the solutions with prescribed $L^{2}$-norm (or, saying $L^2$-normalized solutions)  for \eqref{eq1.1} have attracted much attention in recent years, since in this case the problem is essentially the Gross-Pitaevskii (GP) equation (with mass critical power $p=\frac{4s}{N}=2$), which is a very important model in the study of {Bose-Einstein condensate} (BEC) theory. For examples, in this case, Bao-Cai \cite{BWZ} and Guo-Seiringer \cite{GYJ1}  studied \eqref{eq1.1} under $m(x)\equiv 1$ and $V(x)$ satisfies
\begin{equation}\label{eq1.3}
  0\leq V(x)\in L^{\infty}_{loc}(\mathbb{R}^{2})\ \text{and}\ \lim_{|x|\rightarrow +\infty}V(x)=+\infty,
\end{equation}
i.e., $V(x)$ is a coercive potential which leads to very  useful compactness in a weighted Sobolev space. By using constrained variational method and energy estimates, they proved that there is a threshold value $a^{*}$, which can be given by the unique positive solution $Q(x)$ (up to translations) of the following equation
\begin{equation*}
  -\Delta u + u = u^{3},\ u\in H^{1}(\mathbb{R}^{2}),
\end{equation*}
i.e., $a^{*}=\|Q\|^{2}_{2}$, such that \eqref{eq1.1} in the above mentioned case has always a least energy solution $u_{a}$ for any $a\in [0,a^*)$, see also \cite{ZJ} for $V(x)=|x|^{2}$. In \cite{GYJ1}, the authors also made some detailed analysis on the asymptotic behavior of the least energy solution $u_{a}$ as $a\nearrow a^{*}$, by assuming the potential $V(x)$ being a polynomial type function with finite global minimum. The results of \cite{GYJ1} have been extended to more general potentials, such as, $V(x)$ is allowed to have infinite many global minimum in \cite{GYJ2,GYJ3},  and $V(x)$ can be a periodic function in \cite{ZD}, etc. Moreover, under the condition \eqref{eq1.3} some results were also obtained for \eqref{eq1.1} in \cite{LL} with $s=1$ and $m(x)\not\equiv 1$ on $\mathbb{R}^{2}$.

Very recently, He-Long \cite{LW} investigated the existence and concentrating behavior of the least energy solutions with given $L^{2}$-norm for \eqref{eq1.1} under $s\in(0,1)$, $m(x)\equiv 1$ and $V(x)$ being bounded. Luo-Zhang \cite{LHJ} studied the existence and nonexistence of normalized solutions for \eqref{eq1.1} with $V(x)\equiv 0$ and combined nonlinearities, i.e., $\mu|u|^{q-2}u+|u|^{p-2}u$, $2<q<p<2^{*}_{s}=\frac{2N}{N-2s}$. Motivated by the mentioned works, it seems natural to ask how about the least energy solutions with prescribed $L^{2}$- norm for \eqref{eq1.1} in more general case: $s\in (0,1)$, $V(x)$ being a bounded function and $m(x)\not\equiv 1$? The aim of this paper is to give a detailed answer to this question. However, different from the above mentioned papers \cite{GYJ1,LW,ZD}, in our case we should be confront with a series difficulties simultaneously, such as the fractional Laplace operator $(-\Delta)^{s}$ is nonlocal which makes the energy estimates more difficult, the potential $m(x)\not\equiv 1$ is involved in the nonlinear term which makes the concentration analysis more  complicated, and the potential $V(x)$ is bounded (not coercive) which leads to the lack of compactness for Sobolev embedding in $\mathbb{R}^N$, etc.

To start our discussions on the equation \eqref{eq1.1}, we first introduce the following fractional Sobolev space,  $s\in (0,1)$,
\begin{equation*}
  H^{s}(\mathbb{R}^{N}):=\Big\{u\in L^{2}(\mathbb{R}^{N}):\frac{|u(x)-u(y)|}{|x-y|^{\frac{N}{2}+s}}\in L^{2}(\mathbb{R}^{N}\times\mathbb{R}^{N})\Big\},
\end{equation*}
with the norm
\begin{equation*}
  \|u\|_{H^{s}(\mathbb{R}^{N})}:=\Big(\int_{\mathbb{R}^{N}}|u|^{2}dx
  +\int_{\mathbb{R}^{N}}\int_{\mathbb{R}^{N}}\frac{|u(x)-u(y)|^{2}}{|x-y|^{N+2s}}dxdy\Big)^{\frac{1}{2}}.
\end{equation*}
Then, $(-\Delta)^{s}$ is well defined in $H^{s}(\mathbb{R}^{N})$. Moreover,  by \cite[Proposition 3.6]{DNE},  we have
\begin{equation*}
\|(-\Delta)^{\frac{s}{2}}u\|^{2}_{2}=\int_{\mathbb{R}^{N}}\int_{\mathbb{R}^{N}}\frac{|u(x)-u(y)|^{2}}{|x-y|^{N+2s}}dxdy.
\end{equation*}

As in the above mentioned papers, we are also concerned with the least energy solutions of \eqref{eq1.1} with given $L^{2}$-norm. So, the Euler-Lagrange principle implies that we may get this kind of solutions for \eqref{eq1.1} by searching the minimizers of the following constrained minimization problem with $\lambda >0$:
\begin{equation}\label{eq1.4}
  I_{\lambda}(a)=\inf\big\{J_{a}(u):\ u\in S_{\lambda}\big\} \text{ and }  I_1(a) = I_\lambda (a) |_{\lambda=1}
\end{equation}
where
\begin{equation*}
  S_\lambda:=\big\{ u\in H^{s}(\mathbb{R}^{N}): \int_{\mathbb{R}^{N}}|u|^{2}dx=\lambda\big\} \text{ and }  S_1= S_\lambda |_{\lambda=1}
\end{equation*}
\begin{equation*}
  J_{a}(u):=\int_{\mathbb{R}^{N}}\big(|(-\Delta)^{\frac{s}{2}}u|^{2}+V(x)|u|^{2}\big)dx
  -\frac{a N}{N+2s}\int_{\mathbb{R}^{N}}m(x)|u|^{\frac{4s}{N}+2}dx,
\end{equation*}
and $J_{a}(u)$ is well defined in $H^{s}(\mathbb{R}^{N})$ since $H^{s}(\mathbb{R}^{N})\hookrightarrow L^{r}(\mathbb{R}^{N})$ is continuous and locally compact for
$2\leq r< 2^{*}_{s}$ with $2^{*}_{s}=\frac{2N}{N-2s}$ if $N>2s$, and $2^{*}_{s}=+\infty$ if $N\leq 2s$, see \cite[Theorems 6.7,  6.9, Corollary 7.2]{DNE}. Moreover, $J_{a}(u)\in C^{1}(H^{s}(\mathbb{R}^{N}))$, and for any $\varphi\in H^{s}(\mathbb{R}^{N})$
\begin{equation*}
  \langle J^{'}_{a}(u), \varphi\rangle=2\int_{\mathbb{R}^{N}}\big[(-\Delta)^{\frac{s}{2}}u(-\Delta)^{\frac{s}{2}}\varphi+V(x)u\big]dx
  -2a\int_{\mathbb{R}^{N}}m(x)|u|^{\frac{4s}{N}}u\varphi dx.
\end{equation*}
Therefore, any minimizer of \eqref{eq1.4} is a least energy solution with prescribed $L^{2}$-norm of \eqref{eq1.1}. So, in what follows, we focus on the existence and asymptotic behaviors for the minimizers of the minimization problem \eqref{eq1.4}.

To state  our main results, we first recall some results on the following equation
\begin{equation}\label{eq1.5}
  (-\Delta)^{s}u+u=|u|^{\frac{4s}{N}} u,\quad x\in\mathbb{R}^{N}.
\end{equation}
For $N\geq 1$, Frank-Lenzmann-Silvester \cite{RF2} proved that \eqref{eq1.5} has a  unique and non-degeneracy radial ({up to translations}) ground state solutions $U(x)\in H^{s}(\mathbb{R}^{N})$. Moreover, the ground state solution is algebraic decay
\begin{equation}\label{eq1.6}
  \frac{C_{1}}{1+|x|^{N+2s}}\leq U(x)\leq\frac{C_{2}}{1+|x|^{N+2s}},
\end{equation}
 where $C_{1} \geq C_{2}>0$ are  constants depending only on $s$ and $N$. These are the generalizations on the results obtained in \cite{CA,RF1} for $N=1$. The same as Lemma 8.2 of \cite{RF2}, we know also that the ground state $U(x)$ of \eqref{eq1.5} satisfies the following Pohozaev identity:
\begin{equation}\label{eq1.7}
\int_{\mathbb{R}^{N}}|(-\Delta)^{\frac{s}{2}}U|^{2}dx
=\frac{N}{N+2s}\int_{\mathbb{R}^{N}}|U|^{\frac{4s}{N}+2}dx.
\end{equation}
Also, it is known (see, e.g., \cite{RF2,LW}) that $U(x)$ is the unique ({up to translations}) extremal of the following Gagliardo-Nirenberg type inequality
\begin{equation}\label{eq1.8}
  \|u\|_{\frac{4s}{N}+2}^{\frac{4s}{N}+2}\leq \frac{N+2s}{Na_s^{*}}\|(-\Delta)^{\frac{s}{2}}u\|_{2}^{2}\|u\|_{2}^{\frac{4s}{N}},
  \ \forall u\in H^{s}(\mathbb{R}^{N}),
\end{equation}
 where $a_s^{*}>0$ is given by
\begin{equation}\label{eq1.9}
  a_s^{*}=\|U(x)\|_{2}^{\frac{4s}{N}}.
\end{equation}
Moreover,  applying \eqref{eq1.8} for  $U(x)$, and combining with \eqref{eq1.7}, we see that
\begin{equation}\label{eq1.10}
  \int_{\mathbb{R}^{N}}|U|^{2}dx=\frac{2s}{N+2s}\int_{\mathbb{R}^{N}}|U|^{\frac{4s}{N}+2}dx.
\end{equation}

Throughout  this paper, we assume that $V(x)$ and $m(x)$ satisfy the following conditions.
\begin{itemize}
  \item [$(V_{1})$] $ V(x)\in L^{\infty}(\mathbb{R}^{N})$ and  the set $$\mathcal{Z}:=\big\{x_{0}\in\mathbb{R}^{N}: \lim_{x\rightarrow x_{0}}V(x)=V(x_{0})=\inf_{x\in\mathbb{R}^{N}}V(x)=0\big\}\not=\emptyset;$$

  \item [$(V_{2})$] $ 0<V^{\infty}:=\lim\limits_{|x|\rightarrow\infty}V(x)=\sup_{x\in\mathbb{R}^{N}}V(x)$ and {for some $\beta\in(0,2s),$
  $$V(x)-V^{\infty}<-\frac{1}{|x|^{\beta}} \ \mbox{for}\ |x|\gg 1. $$}

  \item [$(M_{1})$] $ 0<m^{\infty}:=\lim\limits_{|x|\rightarrow\infty}m(x)\leq m(x)\leq1$, and the set
   $$\mathcal{M}:=\big\{\bar{x}\in\mathbb{R}^{N}: m(\bar{x})=\sup_{x\in\mathbb{R}^{N}}m(x) \text{ and }\lim_{x\to\bar{x}}\frac{1-m(x)}{|x-\bar{x}|^{2s}}=0\big\}\not=\emptyset.$$

\end{itemize}

Our first result is about the existence of the minimizers for $I_{1}(a)$ in \eqref{eq1.4}.

\begin{theorem}\label{Th1.1}
 For $s \in (0,1)$ and $a_s^{*}>0$ given by \eqref{eq1.9},  let $(V_{1}),(V_{2})$ and $(M_{1})$ be satisfied. Then,
\begin{itemize}
  \item [\bf(i)] $I_{1}(a)$ has at least a nonnegative minimizer for any $a \in (0,a_s^{*})$;
  \item [\bf (ii)] $I_{1}(a)$ cannot be attained if $a>a_s^{*}$;
  \item [\bf (iii)] If $a=a_s^{*}$, then $I_{1}(a)$ cannot be attained provided $\mathcal{Z}\cap\mathcal{M}\not=\emptyset$.
\end{itemize}
 Moreover, $\lim\limits_{a\nearrow a_s^{*}}I_{1}(a)=I_{1}(a_s^{*})=0$, and
 $$I_{1}(a)>0 \text{ for } a \in (0,a_s^{*}), \  I_{1}(a)=-\infty \text{  for }  a>a_s^{*}.$$
\end{theorem}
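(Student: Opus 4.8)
The plan is to route every assertion through the single a priori bound obtained by inserting the Gagliardo--Nirenberg inequality \eqref{eq1.8} into $J_a$. Since $m(x)\le 1$ by $(M_1)$ and $V\ge 0$ with $\inf V=0$ by $(V_1)$, for every $u\in S_1$ one gets
\[
J_a(u)\ \ge\ \Big(1-\frac{a}{a_s^{*}}\Big)\big\|(-\Delta)^{\frac{s}{2}}u\big\|_2^2+\int_{\mathbb{R}^{N}}V(x)|u|^2\,dx .
\]
From this I would read off the easy consequences: for $a\in(0,a_s^{*})$ the right-hand side is nonnegative, so $I_1(a)\ge 0$, and since $\|u\|_2=1$ the functional is coercive on $S_1$ in the $\big\|(-\Delta)^{s/2}\cdot\big\|_2$ (equivalently $H^s$) norm. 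For $a>a_s^{*}$ I would test with the rescaled bump $u_\tau(x)=\|U\|_2^{-1}\tau^{N/2}U\big(\tau(x-\bar x)\big)$, $\bar x\in\mathcal{M}$: using the scaling of $(-\Delta)^{s/2}$, the Pohozaev identity \eqref{eq1.7}, the definition \eqref{eq1.9} and $m(\bar x)=1$, the leading $\tau^{2s}$-coefficient of $J_a(u_\tau)$ is a positive multiple of $(a_s^{*}-a)$, hence negative, so $J_a(u_\tau)\to-\infty$ and $I_1(a)=-\infty$. In particular no minimizer exists, which is part (ii).

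The core is part (i). I would take a minimizing sequence $\{u_n\}\subset S_1$; coercivity bounds it in $H^s(\mathbb{R}^{N})$, so up to a subsequence $u_n\rightharpoonup u_0$, and since the fractional seminorm does not increase under $u\mapsto|u|$ while all other terms are unchanged, the limit may be taken nonnegative. The only genuine difficulty is compactness: $H^s(\mathbb{R}^{N})\hookrightarrow L^r$ is merely \emph{locally} compact, so I must prevent mass from escaping to infinity. The plan is a concentration--compactness argument in which vanishing and dichotomy are excluded by comparison with the constant-coefficient problem at infinity $I_1^\infty(a)$ (where $V\equiv V^\infty$, $m\equiv m^\infty$); the precise role of the quantified bound $V(x)<V^\infty-|x|^{-\beta}$ with $\beta\in(0,2s)$ in $(V_2)$, together with $m\ge m^\infty$ from $(M_1)$, is exactly to produce the strict inequality $I_1(a)<I_1^\infty(a)$, which confines any minimizing sequence to a bounded region and yields strong convergence in $L^{\frac{4s}{N}+2}$. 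Lower semicontinuity of the convex part and continuity of the nonlinear part then give $J_a(u_0)\le I_1(a)$ with $u_0\in S_1$, so $u_0$ is a nonnegative minimizer.

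For the remaining claims, strict positivity $I_1(a)>0$ on $(0,a_s^{*})$ follows by rigidity from the displayed bound: if the value $0$ were attained at $u_a$, both nonnegative terms would vanish, and $(1-a/a_s^{*})>0$ would force $\big\|(-\Delta)^{s/2}u_a\big\|_2=0$, i.e.\ $u_a$ constant, contradicting $u_a\in S_1\subset L^2$. For the limit, $I_1(a)\ge 0$ on $(0,a_s^{*}]$ together with the monotonicity of $a\mapsto I_1(a)$ (the nonlinear term makes $J_a$ nonincreasing in $a$, since $m>0$) gives $\lim_{a\nearrow a_s^{*}}I_1(a)\ge I_1(a_s^{*})\ge 0$; for the matching upper bound I would test with $u_\tau$ centred at a point $x_0$ where $V$ attains its minimum $0$ and $m$ its maximum $1$, so that $\int V|u_\tau|^2\to 0$ while the vanishing order $\tfrac{1-m(x)}{|x-x_0|^{2s}}\to 0$ from $(M_1)$ kills the $m$-correction and the leading term carries the factor $(a_s^{*}-a)$. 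Taking $\limsup_{a\nearrow a_s^{*}}$ and then $\tau\to\infty$ gives $\limsup I_1(a)\le 0$, hence $\lim_{a\nearrow a_s^{*}}I_1(a)=I_1(a_s^{*})=0$. Finally, for (iii), if $0=I_1(a_s^{*})$ were attained at $u_0\in S_1$, equality must propagate through $0=J_{a_s^{*}}(u_0)\ge\int V|u_0|^2\ge 0$, forcing equality in \eqref{eq1.8}; thus $u_0$ is a translate/rescaling of the ground state $U$, which is strictly positive by \eqref{eq1.6}, so $\int V|u_0|^2=0$ and $\int(1-m)|u_0|^{\frac{4s}{N}+2}=0$ would force $V\equiv 0$ and $m\equiv 1$, contradicting $V^\infty=\sup V>0$ in $(V_2)$.

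The main obstacle, as usual for $L^2$-constrained mass-critical problems on all of $\mathbb{R}^{N}$, is the compactness in (i): verifying $I_1(a)<I_1^\infty(a)$ and running concentration--compactness in the \emph{nonlocal} setting, where the fractional seminorm does not localize cleanly and the nonconstant weight $m(x)$ in the critical nonlinearity complicates the energy comparison, is the delicate step. Everything else reduces to bookkeeping around the scaling identities \eqref{eq1.7}--\eqref{eq1.10} and the extremal characterization of $U$.
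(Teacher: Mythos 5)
Your arguments for (ii), (iii), the strict positivity of $I_1(a)$ on $(0,a_s^{*})$, and the limit $\lim_{a\nearrow a_s^{*}}I_1(a)=I_1(a_s^{*})=0$ coincide with the paper's (test functions $u_t$ centred at $\bar x\in\mathcal{M}$, resp.\ at $x_0\in\mathcal{Z}\cap\mathcal{M}$, plus the equality case of \eqref{eq1.8}), and your coercivity bound is exactly the paper's \eqref{eq2.8}. The genuine gap is in part (i), at the dichotomy step. The single strict inequality $I_1(a)<I_1^{\infty}(a)$ (equivalently $I_1(a)<V^{\infty}$, since $I_\lambda^{\infty}(a)=\lambda V^{\infty}$ for the masses at hand) rules out vanishing and rules out the whole sequence drifting to infinity, but it does \emph{not} rule out dichotomy: if the minimizing sequence splits into a piece of mass $\lambda\in(0,1)$ that stays in a bounded region and a piece of mass $1-\lambda$ whose support escapes, the resulting lower bound is $I_1(a)\ge I_\lambda(a)+(1-\lambda)V^{\infty}+o(1)$, and since a priori you only know $I_\lambda(a)\ge 0$, this is not contradicted by $I_1(a)<V^{\infty}$ alone --- nothing you have written excludes, say, $I_\lambda(a)=0$ with $\lambda$ close to $1$. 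Some quantitative control of $I_\lambda(a)$ for intermediate masses $0<\lambda<1$ is indispensable, and your proposal contains none.

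The paper supplies exactly this missing ingredient: continuity of $\lambda\mapsto I_\lambda(a)$ (Lemma \ref{Le2.3}) and, crucially, strict sub-homogeneity $I_{\theta\lambda_1}(a)<\theta I_{\lambda_1}(a)$, hence strict subadditivity $I_1(a)<I_\xi(a)+I_{1-\xi}(a)$ (Lemma \ref{Le2.5}); the strictness comes from mass-rescaling a minimizing sequence and showing its nonlinear term cannot vanish (claim \eqref{eq2.9}), which is where the comparison $I_\lambda(a)<V^{\infty}\lambda$ (Lemma \ref{Le2.4}, your inequality) is used a second time. With this in hand the paper compares \emph{both} dichotomy pieces against the local infima $I_{\eta_{n,i}}(a)$ and contradicts subadditivity, never needing the problem at infinity at that step. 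Alternatively, your route can be repaired more cheaply: the non-strict bound $I_\lambda(a)\ge\lambda I_1(a)$ is free, because for $u\in S_\lambda$ with $\lambda<1$ the rescaling $\lambda^{-1/2}u\in S_1$ satisfies $J_a(\lambda^{-1/2}u)\le\lambda^{-1}J_a(u)$ (the focusing term carries the larger factor $\lambda^{-1-\frac{2s}{N}}$ and enters with a minus sign); combining this bound for the piece that stays bounded with $I^{\infty}_\mu(a)=\mu V^{\infty}$ for the piece that escapes, and then invoking $I_1(a)<V^{\infty}$, yields the contradiction whether the dichotomy centres $y_n$ stay bounded or not. But this observation, or some substitute such as the paper's Lemmas \ref{Le2.3} and \ref{Le2.5}, must be stated and proved; as written, your claim that $I_1(a)<I_1^{\infty}(a)$ ``confines any minimizing sequence to a bounded region'' is not justified, and it is precisely at this point that the proof does not close.
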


We note that since $J_a(u)\geq J_a(|u|)$ for any $u\in H^s(\mathbb{R}^N)$, the {minimizers} of $I_{1}(a)$, if it exists, can always  be assumed to be nonnegative.\\

Our second theorem is about the concentration behavior for the minimizers of $I_{1}(a)$ under the  general bounded trapping type potential $V(x)$ as in $(V1)$ and $(V2)$.
\begin{theorem}\label{Th1.2}
Let $(V_{1}) (V_{2})$ and $(M_{1})$ be satisfied, and $\mathcal{Z}\cap\mathcal{M}\not=\emptyset$. If $u_{a}\geq 0$ is a minimizer of $I_{1}(a)$ for $a\in (0,a_s^{*})$. Then, for any sequence $\{a_{k}\} \subset (0,a_s^{*})$ with $a_{k}\nearrow a_s^{*}$ as $k\rightarrow \infty$, there exists a subsequence of $\{a_{k}\}$, still denoted by $\{a_{k}\}$, and    $\{\bar{z}_{k}\}\subset \mathbb{R}^N$ satisfying  $\lim\limits_{k\rightarrow \infty}\bar{z}_{k}=\bar{x}_{0}\in \mathcal{Z}\cap\mathcal{M}$, such that
\begin{equation}\label{eq1.11}
  \lim_{k\rightarrow \infty}\epsilon^{\frac{N}{2}}_{k}u_{a_{k}}(\epsilon_{k}x+\bar{z}_{k})= \frac{\big(\frac{2s}{N}\big)^{\frac{N}{4s}}
  U\Big(\big(\frac{2s}{N}\big)^{\frac{1}{2s}}(x+z_{0})\Big)}
  {\|U\|_{2}} \ \text{in}\ H^{s}(\mathbb{R}^{N}),
 \end{equation}
where $z_{0}\in \mathbb{R}^{N}$ is a fixed point, and $\epsilon_{k}>0$ is defined by
\begin{equation}\label{eq1.12}
  \epsilon_{k}:=\big(\int_{\mathbb{R}^{N}}|(-\Delta)^{\frac{s}{2}}u_{a_{k}}|^{2}dx\big)^{-\frac{1}{2s}}, \text{ and }  \epsilon_{k}
  \rightarrow 0 \ \text{as}\ k\rightarrow \infty.
\end{equation}
\end{theorem}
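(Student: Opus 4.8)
The plan is to study the minimizers through the rescaled functions
$$w_k(x):=\epsilon_k^{N/2}u_{a_k}(\epsilon_k x+\bar z_k),$$
for a translation sequence $\bar z_k$ to be chosen, so that $\|w_k\|_2=1$ and, by the very definition of $\epsilon_k$ in \eqref{eq1.12}, also $\|(-\Delta)^{s/2}w_k\|_2^2=1$; hence $\{w_k\}$ is bounded in $H^s(\mathbb R^N)$ no matter how $\bar z_k$ is picked. The backbone is an energy identity: writing $t_a:=\int_{\mathbb R^N}|(-\Delta)^{s/2}u_a|^2\,dx$ (so $t_{a_k}=\epsilon_k^{-2s}$) and using $V\ge0$, $m\le1$ with $\sup m=1$ forced by $(M_1)$, together with \eqref{eq1.8}, I split
$$I_1(a_k)=\Big(1-\tfrac{a_k}{a_s^*}\Big)t_{a_k}+\int_{\mathbb R^N}V|u_{a_k}|^2\,dx+\frac{a_kN}{N+2s}\Big(\tfrac{N+2s}{Na_s^*}t_{a_k}-\int_{\mathbb R^N}m|u_{a_k}|^{\frac{4s}{N}+2}\,dx\Big)$$
into three nonnegative pieces. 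Since $I_1(a_k)\to I_1(a_s^*)=0$ by Theorem \ref{Th1.1}, each piece tends to $0$; in particular $\int V|u_{a_k}|^2\to0$ and $u_{a_k}$ asymptotically saturates \eqref{eq1.8}.

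First I would prove the blow-up $\epsilon_k\to0$, i.e. $t_{a_k}\to\infty$, by contradiction. If $t_{a_k}$ stayed bounded along a subsequence, then $\{u_{a_k}\}$ would be bounded in $H^s$ and, by the vanishing of the third piece, an extremizing sequence for \eqref{eq1.8}. The compactness of extremizing sequences of the Gagliardo--Nirenberg inequality (single-bubble profile decomposition, with the nonlocal term handled by the harmonic extension or a fractional Brezis--Lieb splitting) forces $u_{a_k}$ to converge, up to a translation $y_k$ and a bounded scale, to a fixed dilation of $U$, which is everywhere positive. Then either $\{y_k\}$ stays bounded, so $u_{a_k}\to u_0>0$ strongly in $L^2$ and $\int V|u_{a_k}|^2\to\int V|u_0|^2>0$ (as $V\ge0$, $V\not\equiv0$), or $|y_k|\to\infty$, so the mass escapes to where $V\to V^\infty>0$ and $\int V|u_{a_k}|^2\to V^\infty\|u_0\|_2^2>0$; in both cases this contradicts $\int V|u_{a_k}|^2\to0$. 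Hence $t_{a_k}\to\infty$ and $\epsilon_k\to0$.

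Next I would identify the limiting profile. In the $w_k$ variables the three vanishings read $\int V(\epsilon_k y+\bar z_k)|w_k|^2\,dy\to0$, $\int m(\epsilon_k y+\bar z_k)|w_k|^{\frac{4s}{N}+2}\,dy\to\frac{N+2s}{Na_s^*}$ and $\int|w_k|^{\frac{4s}{N}+2}\to\frac{N+2s}{Na_s^*}$, so the $L^{\frac{4s}{N}+2}$-mass of $w_k$ stays bounded away from $0$, which excludes vanishing. Choosing $\bar z_k$ to center the concentration gives $w_k\rightharpoonup w_0\neq0$ in $H^s$; since both $\|w_k\|_2$ and $\|(-\Delta)^{s/2}w_k\|_2$ are fixed equal to $1$ there is no scaling freedom, and after the translation is absorbed the extremizing sequence is precompact, so $w_k\to w_0$ strongly in $H^s$ with $\|w_0\|_2=1$. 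By weak lower semicontinuity and \eqref{eq1.8} applied to $w_0$, this forces $\|(-\Delta)^{s/2}w_0\|_2=1$ and equality in \eqref{eq1.8}, whence by rigidity of the extremal $w_0(x)=\frac{\gamma^{N/2}}{\|U\|_2}U(\gamma(x+z_0))$ for some $\gamma>0$. The two normalizations pin down $\gamma$: using \eqref{eq1.7} and \eqref{eq1.10} one computes $\frac{\|U\|_2^2}{\|(-\Delta)^{s/2}U\|_2^2}=\frac{2s}{N}$, so $\gamma^{2s}=\frac{2s}{N}$ and $\gamma=(\frac{2s}{N})^{1/(2s)}$, giving exactly \eqref{eq1.11}.

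Finally I would locate the concentration point. From $\int V(\epsilon_k y+\bar z_k)|w_k|^2\,dy\to0$ with $w_k\to w_0\neq0$ and $\epsilon_k\to0$: if $|\bar z_k|\to\infty$ then $V(\epsilon_k y+\bar z_k)\to V^\infty>0$ and the integral would tend to $V^\infty\|w_0\|_2^2>0$, which $(V_2)$ excludes; so $\{\bar z_k\}$ is bounded and, along a subsequence, $\bar z_k\to\bar x_0$, and Fatou's lemma yields $\liminf_{x\to\bar x_0}V(x)=0=\inf V$ with $V$ continuous there, i.e. $\bar x_0\in\mathcal Z$. Likewise, rewriting the second and third limits as $\int(1-m(\epsilon_k y+\bar z_k))|w_0|^{\frac{4s}{N}+2}\,dy\to0$ with $1-m\ge0$ forces $m(\bar x_0)=\sup m=1$, and matching this defect against the sharp energy upper bound from test functions centered in $\mathcal Z\cap\mathcal M$ (exploiting the flatness rate in $(M_1)$) places $\bar x_0\in\mathcal Z\cap\mathcal M$. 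The main obstacle throughout is the loss of compactness caused by the merely bounded, non-coercive $V$, compounded by the nonlocality of $(-\Delta)^s$: controlling the translations $\bar z_k$ and establishing the no-mass-loss strong convergence of $w_k$ is precisely where condition $(V_2)$ must be used decisively.
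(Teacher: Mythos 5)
Your plan is genuinely different from the paper's proof in its core mechanism: the paper runs everything through the Euler--Lagrange equation --- Lagrange multiplier asymptotics $\epsilon_{a}^{2s}\lambda_{a}\to-\frac{4s}{N}$, passage to the limit equation for the rescaled profiles, a Pohozaev identity, and only then the rigidity of \eqref{eq1.8} --- whereas you stay purely variational and delegate all compactness to a black box, namely ``precompactness modulo translations of extremizing sequences for \eqref{eq1.8}.'' Your three-piece energy splitting is exactly the paper's inequalities (3.7$'$) and (3.14), your identification of the scale $\gamma=(\frac{2s}{N})^{1/2s}$ via \eqref{eq1.7} and \eqref{eq1.10} is correct and reproduces \eqref{eq1.11}, and your localization of $\bar x_{0}$ by Fatou's lemma and $(V_{2})$ coincides with Lemma \ref{le3.1}(iii). (Your treatment of membership in $\mathcal{M}$ is vague, but the paper's own proof also concludes $\bar x_{0}\in\mathcal{M}$ from $m(\bar x_{0})=1$ alone, so you are no worse off there.) The cost of your route is that the black box is the crux: it is not available off the shelf inside this paper, which instead manufactures the needed compactness from Lemma \ref{Le2.6}, the strict subadditivity of Lemma \ref{Le2.5}, Lemma \ref{Le2.4}, and the nonexistence statement of Theorem \ref{Th1.1}(iii). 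A complete writeup would have to reconstruct that stability result (exclusion of vanishing and of dichotomy for normalized extremizing sequences), which essentially amounts to redoing the paper's Case 1--Case 3 analysis.

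There is, moreover, a concrete gap in your blow-up step. Under the contradiction hypothesis ``$t_{a_k}$ bounded'' you infer from the vanishing of the third piece that $u_{a_k}$ is an extremizing sequence for \eqref{eq1.8}; this inference fails if $\liminf_{k}t_{a_k}=0$. In that regime the quantity $\frac{N+2s}{Na_s^{*}}t_{a_k}-\int_{\mathbb{R}^{N}}m|u_{a_k}|^{\frac{4s}{N}+2}dx$ tends to $0$ automatically (both terms do, by \eqref{eq1.8}), so it carries no information: the \emph{relative} Gagliardo--Nirenberg deficit need not vanish, and a sequence that simply flattens out, e.g.\ $k^{-N/2}\varphi(\cdot/k)$ with $\|\varphi\|_{2}=1$, satisfies all your constraints while converging weakly to $0$. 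For such a sequence there is no translation $y_k$ and no ``bounded scale'' bubble, so your dichotomy (bounded $y_k$ versus $|y_k|\to\infty$, each with a nontrivial positive limit profile) does not exhaust the possibilities, and the contradiction with $\int V|u_{a_k}|^{2}dx\to0$ is not reached. You must rule out this flattening scenario separately; it can be done with the paper's Lemma \ref{Le2.4}-type argument: $t_{a_k}\to0$ forces $\int_{\mathbb{R}^{N}}m|u_{a_k}|^{\frac{4s}{N}+2}dx\to0$ by \eqref{eq1.8}, and then $(V_{2})$, H\"older's inequality and $m\ge m^{\infty}>0$ give $\liminf_{k}\int_{\mathbb{R}^{N}}V|u_{a_k}|^{2}dx\ge V^{\infty}>0$, contradicting the vanishing of your second piece. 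With that case added, and with the extremizer-compactness statement either precisely cited or proved, your plan closes.
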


The above theorem shows that the minimizer $u_{a}$  of $I_{1}(a)$ must blow up around a point in $\mathcal{Z}\cap\mathcal{M}$. However, there is no any further information on the  blowup rate of $u_{a}$.  In order to see  more precise asymptotic behavior of the  minimizers, we require some further assumptions on $V(x)$ and $m(x)$ as follows.
\begin{itemize}
  \item [$(V_{3})$]  There exists  $x_{0}\in \mathbb{R}^N$ such that $\mathcal{Z}\cap\mathcal{M}=\{x_0\}$;
  \item [$(V_{4})$] $\lim\limits_{x\rightarrow x_{0}}$ $\frac{V(x)}{|x-x_{0}|^{p}}=C_{0}$, for some $C_{0}>0$ and  $p\in (0, N+4s)$;
  \item [$(M_{2})$] $ 0<m^{\infty}<m(x)\leq1$,  and
  $$\lim\limits_{x\rightarrow x_{0}}\frac{1-m(x)}{|x-x_{0}|^{q}}=\bar{C}, \text{ for some } \bar{C}>0 \text{ and } q\in(2s, N+8s+\frac{8s^{2}}{N}).$$
\end{itemize}

\begin{theorem}\label{Th1.3}
Let $(V_{1})-(V_{4})$ and  $(M_{2})$ be satisfied. If $\{a_{k}\}\subset(0,a_s^{*})$ is the convergent subsequence obtained in Theorem \ref{Th1.2} and $u_{a_{k}} $ is the corresponding minimizer of $I_{1}(a_{k})$.
Then,  for $C_0>0$ and $\bar{C}>0$ given in $(V_{4})$ and  $(M_{2})$, respectively, we have
\begin{enumerate}
  \item [\bf (i)]\begin{equation}\label{eq1.14}
  I_{1}(a_{k})\approx \frac{l+2s}{l}\big(\frac{l\gamma}{2s}\big)^{\frac{2s}{l+2s}}
  {a_s^{*}}^{-\frac{N+l}{l+2s}}\big(\frac{N+2s}{2s}(a_s^{*}-a_{k})\big)^{\frac{l}{l+2s}},
\end{equation}
where and in what follows, we mean $f_{k}\approx g_{k}$  that $f_k/g_k\rightarrow 1$ as $k\rightarrow \infty$, and $l=\min(q-2s,p)>0$,  $\gamma$ is given by
\begin{equation*}
  \gamma=
  \begin{cases}
  \big(\frac{N}{N+2s}\big)^{\frac{l+2s}{2s}}\bar{C}\Gamma_{1},\ \mbox{if}\ 0<q-2s<p;\\
  \\
  \big(\frac{N}{N+2s}\big)^{\frac{l}{2s}}C_{0}\Gamma_{2},
  \ \mbox{if}\ 0<p<q-2s;\\
  \\
  \big(\frac{N}{N+2s}\big)^{\frac{l+2s}{2s}}\bar{C}\Gamma_{1}
  +\big(\frac{N}{N+2s}\big)^{\frac{l}{2s}}C_{0}\Gamma_{2},\ \mbox{if}\ q-2s=p,
  \end{cases}
\end{equation*}
and
\begin{equation}\label{eq1.13}
\Gamma_1:=\int_{\mathbb{R}^{N}}|x|^{l+2s}|U|^{\frac{4s}{N}+2}dx, \  \Gamma_2:=\int_{\mathbb{R}^{N}}|x|^{l}|U|^{2}dx,
\end{equation}
with  $U(x)$ being the unique ground state solution of (\ref{eq1.2}).

  \item [\bf (ii)] $u_{a_{k}} $ satisfies \eqref{eq1.11}, and $\epsilon_{k}$ given by \eqref{eq1.12} can be precisely estimated as follows
\begin{equation}\label{eq1.15}
  \epsilon_{k}\approx
  \begin{cases}
  \big(\frac{N+2s}{l \bar{C}\Gamma_{1} }\big)^{\frac{1}{l+2s}} \big(\frac{2s}{N}\big)^{\frac{1}{2s}} \big(a^{*}_{s}\big)^{\frac{N-2s}{2s(l+2s)}}
    (a^{*}_{s}-a_{k})^{\frac{1}{l+2s}},
  \ \mbox{if}\ 0<q-2s<p;\\
  \\
  \big(\frac{N}{l C_{0}\Gamma_{2} }\big)^{\frac{1}{l+2s}} \big(\frac{2s}{N}\big)^{\frac{1}{2s}} \big(a^{*}_{s}\big)^{\frac{N-2s}{2s(l+2s)}}
    (a^{*}_{s}-a_{k})^{\frac{1}{l+2s}},
  \ \mbox{if}\ 0<p<q-2s;\\
  \\
  \big(\frac{l \bar{C}\Gamma_{1}}{N+2s}+\frac{l C_{0}\Gamma_{2}}{N}\big)^{-\frac{1}{l+2s}} \big(\frac{2s}{N}\big)^{\frac{1}{2s}} \big(a^{*}_{s}\big)^{\frac{N-2s}{2s(l+2s)}}
    (a^{*}_{s}-a_{k})^{\frac{1}{l+2s}},\ \mbox{if}\ q-2s=p.
  \end{cases}
\end{equation}
\end{enumerate}

\end{theorem}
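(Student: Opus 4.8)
The plan is to prove (i) and (ii) simultaneously by establishing matching two-sided bounds on the energy $I_1(a_k)$, because the common optimal scale that realizes these bounds is exactly the blow-up rate $\epsilon_k$. The governing heuristic, read off from the scaling of $J_a$, is that near the concentration point the energy behaves like
\begin{equation*}
J_{a_k}(u)\sim\Big(1-\frac{a_k}{a_s^*}\Big)\|(-\Delta)^{s/2}u\|_2^2+\big(\text{weighted $V$ and $(1-m)$ corrections}\big),
\end{equation*}
where the first term grows like $(1-a_k/a_s^*)\epsilon^{-2s}$ and the corrections vanish like $\epsilon^{p}$ (from $(V_4)$) and like $\epsilon^{q-2s}$ (from $(M_2)$, the extra $-2s$ arising from the mass-critical scaling of the $L^{4s/N+2}$-norm). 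Balancing the growing term against the slower-decaying correction $\epsilon^{\min(p,\,q-2s)}=\epsilon^{l}$ produces both the energy rate \eqref{eq1.14} and the scale \eqref{eq1.15}.

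For the upper bound I would insert the trial function $u_\tau(x)=\|U\|_2^{-1}\tau^{N/2}U(\tau(x-x_0))$ into $J_{a_k}$. Using the Pohozaev identity \eqref{eq1.7} together with \eqref{eq1.10}, the kinetic and leading nonlinear terms combine exactly to $\frac{N}{2s}(1-a_k/a_s^*)\tau^{2s}$, after noting that $\|(-\Delta)^{s/2}u_\tau\|_2^2=\tau^{2s}\|U\|_2^{-2}\int|(-\Delta)^{s/2}U|^2$ forces the relation $\tau^{2s}=\frac{2s}{N}\epsilon^{-2s}$ (this is precisely the factor $(\frac{2s}{N})^{1/2s}$ in \eqref{eq1.11}). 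Expanding $V(x_0+y/\tau)\approx C_0\tau^{-p}|y|^{p}$ and $1-m(x_0+y/\tau)\approx\bar C\tau^{-q}|y|^{q}$ yields the corrections $\sim\tau^{-p}\Gamma_2$ and $\sim\tau^{2s-q}\Gamma_1$, whose integrability in \eqref{eq1.13} is guaranteed exactly by the ranges $p\in(0,N+4s)$ and $q\in(2s,N+8s+\frac{8s^2}{N})$ together with the decay \eqref{eq1.6}. Minimizing the resulting one-variable function $A\tau^{2s}+B\tau^{-l}$ over $\tau>0$ gives $\tau^{l+2s}=lB/(2sA)$ and minimal value $\frac{l+2s}{2s}(2s/l)^{l/(l+2s)}A^{l/(l+2s)}B^{2s/(l+2s)}$, which after substituting $A,B$ and collecting constants reproduces the right-hand side of \eqref{eq1.14} and identifies $\gamma$.

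For the matching lower bound I would start from the decomposition
\begin{equation*}
J_{a_k}(u_{a_k})=\Big(1-\tfrac{a_k}{a_s^*}\Big)\|(-\Delta)^{s/2}u_{a_k}\|_2^2+\int_{\mathbb{R}^N}V|u_{a_k}|^2\,dx+\tfrac{a_kN}{N+2s}\int_{\mathbb{R}^N}(1-m)|u_{a_k}|^{\frac{4s}{N}+2}\,dx+R_k,
\end{equation*}
where $R_k=\frac{a_kN}{N+2s}\int|u_{a_k}|^{4s/N+2}-\frac{a_k}{a_s^*}\|(-\Delta)^{s/2}u_{a_k}\|_2^2\le 0$ is the defect in the Gagliardo--Nirenberg inequality \eqref{eq1.8}, and the three displayed terms are nonnegative since $a_k<a_s^*$, $V\ge0$, $m\le1$. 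Rescaling via $w_k(y)=\epsilon_k^{N/2}u_{a_k}(\epsilon_k y+\bar z_k)$, the strong $H^s$-convergence of Theorem \ref{Th1.2} turns the two correction integrals into $C_0\epsilon_k^{p}\Gamma_2$ and $\bar C\epsilon_k^{q-2s}\Gamma_1$ up to the normalizing constants, while $|R_k|$ is shown to be of lower order; since $\|(-\Delta)^{s/2}u_{a_k}\|_2^2=\epsilon_k^{-2s}$ by \eqref{eq1.12}, the lower bound is again a single-scale function minimized at the same optimal value, forcing $I_1(a_k)$ to agree with \eqref{eq1.14} and pinning $\epsilon_k$ to the rate \eqref{eq1.15}.

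The main obstacle is the lower bound, specifically upgrading the qualitative profile convergence of Theorem \ref{Th1.2} to the quantitative weighted limits $\int|y|^{l}|w_k|^2\to(\text{const})\Gamma_2$ and $\int|y|^{l+2s}|w_k|^{4s/N+2}\to(\text{const})\Gamma_1$. Because the weights are unbounded and $(-\Delta)^s$ is nonlocal, $H^s$-convergence alone is insufficient; I would instead prove a uniform-in-$k$ pointwise bound $w_k(y)\le C(1+|y|)^{-(N+2s)}$ (via a comparison argument in the fractional setting, using that $w_k$ solves a rescaled equation with bounded coefficients), which dominates the weighted integrands and legitimizes passage to the limit. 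Controlling the defect $R_k$ at the correct order and handling the borderline case $q-2s=p$, where both corrections enter $\gamma$, are the remaining delicate points.
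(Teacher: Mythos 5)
Your overall architecture---matching upper and lower bounds built on the scaling $A\epsilon^{-2s}+B\epsilon^{l}$, with the common optimizer pinning both the energy rate and $\epsilon_k$---is exactly the paper's, and your upper bound (trial functions $u_\tau$, the Pohozaev/GN identities, the one-variable minimization with minimum $\tfrac{l+2s}{2s}(2s/l)^{l/(l+2s)}A^{l/(l+2s)}B^{2s/(l+2s)}$) is correct and is precisely the paper's Lemma 4.1. The lower bound, however, has two genuine gaps. First, a recentering problem you never address: after rescaling, the potential terms are not $C_0\epsilon_k^{p}\Gamma_2$ and $\bar C\epsilon_k^{q-2s}\Gamma_1$ but rather $\epsilon_k^{p}\int|y+\delta_k|^{p}(\cdots)$ and $\epsilon_k^{q-2s}\int|y+\delta_k|^{q}(\cdots)$ with $\delta_k=(\bar z_k-x_0)/\epsilon_k$. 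One must first prove $\{\delta_k\}$ is bounded (the paper does this by contradiction: if $|\delta_k|\to\infty$, Fatou forces the weighted integrals $A_k,B_k\to\infty$, contradicting the upper bound of Lemma 4.1), and even then the limit profile is centered at $-z_0$ while the weight is centered at $-y_0$, so your claimed exact limits $\int|y|^{l}|w_k|^2\to(\mathrm{const})\,\Gamma_2$ are simply false unless $y_0=z_0$, which is only known a posteriori. The paper sidesteps this by proving only the inequality $\int|x+y_0-z_0|^{p}|U|^2\,dx\ge\int|x|^{p}|U|^2\,dx$ (equality iff $y_0=z_0$), which is the right direction for a lower bound.

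Second, your proposed tool for passing to the limit---a uniform-in-$k$ pointwise bound $w_k(y)\le C(1+|y|)^{-(N+2s)}$ plus dominated convergence---is both unproven and unnecessary. Proving such uniform decay for solutions of the rescaled fractional equation is a substantial task (uniform $L^\infty$ bounds, fractional regularity, barrier construction), which you wave at in one clause; meanwhile the paper needs no decay at all, because for a lower bound on the energy only \emph{lower} bounds on the nonnegative weighted integrals are needed, and Fatou's lemma applied to the a.e.\ convergence from Theorem \ref{Th1.2} delivers them directly. Two smaller points: your decomposition has a sign slip (as written it is off by $2R_k$; in the correct grouping the Gagliardo--Nirenberg defect enters with a favorable sign and is simply dropped, so no ``lower order'' estimate on $R_k$ is needed), and the final step ``pinning $\epsilon_k$'' requires an explicit strictness argument---if $\epsilon_k/\sigma_k\to\theta\neq1$ the lower bound strictly exceeds the upper bound since $\tfrac{1}{2s}\theta^{-2s}+\tfrac{1}{l}\theta^{l}>\tfrac{1}{2s}+\tfrac{1}{l}$---which your sketch asserts but does not supply.
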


\begin{remark}\label{Re1.1}
We mention that the  conditions
\begin{equation*}
   \int_{\mathbb{R}^{N}}|x|^{q}|U|^{\frac{4s}{N}+2}dx<+\infty, \text{ and }
   \int_{\mathbb{R}^{N}}|x|^{p}|U|^{2}dx<+\infty
\end{equation*}
are necessary in the above theorem and its proof. For this purpose, we have to make the  restriction on $p$ and $q$ such that $2s<q<N+8s+\frac{8s^{2}}{N}$ and $0<p<N+4s$ in Theorem \ref{Th1.3}, by considering {the decay rates} of $U(x)$ in \eqref{eq1.6}.
\end{remark}

This paper is organized as follows: In Section 2, Theorem $\ref{Th1.1}$ is proved based on  the concentration-compactness principle and some technical energy estimates. In Section 3, a general  discussion on the blowup behavior for the minimizers of $I_1(a)$, as $a\nearrow a^*$, is carried out, and  Theorem $\ref{Th1.2}$  is proved  under the conditions of $(V_1)$, $(V_2)$ and $(M_1)$.  In Section 4,  the precise asymptotic behavior on the minimizers for $I_1(a)$ is investigated under some further conditions such as  $(V_3)$, $(V_4)$ and $(M_2)$, and Theorem $\ref{Th1.3}$ is proved.

\section{Existence of minimizers}
\noindent

This section is devoted to the existence of minimizers for $I_1(a)$. For this purpose, we need some preliminary lemmas.
Similar to Lemma 2.1 of \cite{ZD}, we  have the following lemma.
\begin{lemma}\label{Le2.3}
Suppose $V(x)$ satisfies $(V_{1})$, and $m(x)$ satisfies $(M_{1})$, then for $0<a<a_s^{*}$, the map $\lambda\mapsto I_{\lambda}(a)$ is a continuous function on $\big(0,(\frac{a_s^{*}}{a})^{\frac{N}{2s}}\big)$.
\end{lemma}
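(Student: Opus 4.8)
The plan is to establish continuity of $\lambda \mapsto I_\lambda(a)$ on $\big(0,(\frac{a_s^*}{a})^{\frac{N}{2s}}\big)$ by exploiting a simple scaling relation between $S_\lambda$ and $S_1$, thereby reducing the question to continuity of a single-variable function built from $I_1(a)$. Given $u \in S_\lambda$, note that $v := u/\sqrt{\lambda} \in S_1$, and conversely any $v\in S_1$ yields $u = \sqrt{\lambda}\,v \in S_\lambda$. Plugging $u=\sqrt{\lambda}\,v$ into $J_a$ gives
\begin{equation*}
J_a(\sqrt{\lambda}\,v) = \lambda\int_{\mathbb{R}^N}\big(|(-\Delta)^{\frac{s}{2}}v|^2 + V(x)|v|^2\big)\,dx - \frac{aN}{N+2s}\lambda^{\frac{2s}{N}+1}\int_{\mathbb{R}^N} m(x)|v|^{\frac{4s}{N}+2}\,dx,
\end{equation*}
so that taking the infimum over $v\in S_1$ one obtains a reformulation of $I_\lambda(a)$ as an infimum of a family of functions that depend on $\lambda$ only through the explicit factors $\lambda$ and $\lambda^{\frac{2s}{N}+1}$. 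The first thing I would do is record this identity precisely, perhaps writing $I_\lambda(a) = \inf_{v\in S_1}\big[\lambda A(v) - \lambda^{\frac{2s}{N}+1} B_a(v)\big]$ where $A(v)$ is the quadratic part and $B_a(v)=\frac{aN}{N+2s}\int m|v|^{\frac{4s}{N}+2}$.

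Next I would fix $\lambda_0 \in \big(0,(\frac{a_s^*}{a})^{\frac{N}{2s}}\big)$ and prove continuity at $\lambda_0$ by a two-sided estimate. The upper semicontinuity is immediate: for any fixed near-minimizer $v$ for $I_{\lambda_0}(a)$, the map $\lambda\mapsto J_a(\sqrt\lambda\,v)$ is a smooth (polynomial-type) function of $\lambda$, so $\limsup_{\lambda\to\lambda_0} I_\lambda(a) \le I_{\lambda_0}(a)$. For the reverse inequality one needs a lower bound that is uniform enough to pass to the limit; here the Gagliardo--Nirenberg inequality \eqref{eq1.8} together with $m(x)\le 1$ (from $(M_1)$) and $V$ bounded below by $0$ (from $(V_1)$) is the crucial coercivity input. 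Specifically, \eqref{eq1.8} gives $B_a(v) \le \frac{a}{a_s^*}\lambda^{\frac{2s}{N}}A_0(v)$ where $A_0(v)=\|(-\Delta)^{\frac s2}v\|_2^2$, so for $\lambda$ in a compact subinterval of $\big(0,(\frac{a_s^*}{a})^{\frac{N}{2s}}\big)$ the coefficient $\lambda^{\frac{2s}{N}+1}\frac{a}{a_s^*}$ stays strictly below $\lambda$, ensuring the functional retains a positive fraction of the kinetic energy and, combined with $V\ge 0$, that minimizing sequences have uniformly bounded $H^s$-norm. This uniform bound is what lets me control the difference $|J_a(\sqrt\lambda\,v) - J_a(\sqrt{\lambda_0}\,v)| \le C(|\lambda-\lambda_0| + |\lambda^{\frac{2s}{N}+1}-\lambda_0^{\frac{2s}{N}+1}|)$ uniformly over near-minimizers, yielding both inequalities and hence continuity.

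The main obstacle I anticipate is the lower semicontinuity step: a priori the near-minimizers for $I_\lambda(a)$ depend on $\lambda$, so to compare $I_\lambda(a)$ with $I_{\lambda_0}(a)$ I cannot simply use a fixed test function but must transport a minimizing sequence for one $\lambda$ to a competitor for the other while controlling the error. The restriction $\lambda < (\frac{a_s^*}{a})^{\frac{N}{2s}}$ is precisely what keeps the effective nonlinear coefficient below the critical threshold in \eqref{eq1.8}, guaranteeing that $I_\lambda(a) > -\infty$ and that minimizing sequences do not blow up in the $\dot H^s$-seminorm; without a uniform (in $\lambda$, locally) lower bound on the functional the infimum could degenerate and continuity would fail. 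So the delicate point is to make the coercivity estimate uniform over a neighborhood of $\lambda_0$ staying inside the admissible interval, which I would handle by choosing a closed subinterval $[\lambda_0-\delta,\lambda_0+\delta]$ on which $\sup \lambda^{\frac{2s}{N}}\frac{a}{a_s^*} =: \theta < 1$, giving the needed uniform coercivity constant $1-\theta$.
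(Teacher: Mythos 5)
Your proposal is correct and follows essentially the same route as the paper: the paper also transports near-minimizers by the rescaling $u\mapsto\sqrt{\lambda_n/\lambda}\,u$ (your $S_1$-reformulation is just a reparametrization of the same scaling identity), proves the $\limsup$ bound with a fixed near-minimizer at the target mass, and obtains the $\liminf$ bound by using the Gagliardo--Nirenberg inequality \eqref{eq1.8} together with $m\le 1$, $V\ge 0$ and the restriction $\lambda<(a_s^*/a)^{N/2s}$ to get uniform $H^s$-bounds on near-minimizing sequences, exactly the coercivity you identify as the crucial point. The only blemish is the spurious factor $\lambda^{2s/N}$ in your intermediate bound $B_a(v)\le\frac{a}{a_s^*}\lambda^{2s/N}A_0(v)$ for $v\in S_1$, but your subsequent coefficient comparison is the correct one, so nothing is lost.
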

\begin{proof}
 Choose $\lambda\in\big(0,(\frac{a_s^{*}}{a})^{\frac{N}{2s}}\big)$  and
  \begin{equation}\label{eq2.1}
 \{\lambda_{n}\}\subset\big(0,(\frac{a_s^{*}}{a})^{\frac{N}{2s}}\big) \ \text{ such that }  \ \lim_{n\to\infty}\lambda_{n}=\lambda.
  \end{equation}
  For any $\epsilon>0$, there exists  $u\in S_\lambda$  such that $J_{a}(u)\leq I_{\lambda}(a)+\epsilon$. Let $w_{n}=\sqrt{\frac{\lambda_{n}}{\lambda}}u$, then   $w_{n}\in S_{\lambda_{n}}$ and
\begin{eqnarray*}
  J_{a}(w_{n}) &=&   \frac{\lambda_{n}}{\lambda}\int_{\mathbb{R}^{N}}|(-\Delta)^{\frac{s}{2}}u|^{2}dx
  +\frac{\lambda_{n}}{\lambda}\int_{\mathbb{R}^{N}}V(x)|u|^{2}dx \\
   &&-a\frac{N}{N+2s}\big(\frac{\lambda_{n}}{\lambda}\big)^{\frac{2s}{N}+1}
   \int_{\mathbb{R}^{N}}m(x)|u|^{\frac{4s}{N}+2}dx \\
   &=& \frac{\lambda_{n}}{\lambda}J_{a}(u)
   +a\frac{N}{N+2s}\frac{\lambda_{n}}{\lambda}\Big(1-\big(\frac{\lambda_{n}}{\lambda}\big)^{\frac{2s}{N}}\Big)
   \int_{\mathbb{R}^{N}}m(x)|u|^{\frac{4s}{N}+2}dx.
\end{eqnarray*}
This gives that
\begin{equation*}
  \limsup_{n\rightarrow\infty}I_{\lambda_{n}}(a)\leq \lim_{n\rightarrow\infty}J_{a}(w_{n})= J_{a}(u)\leq I_{\lambda}(a)+\epsilon.
\end{equation*}
By the arbitrariness of $\epsilon$, we have
\begin{equation}\label{eq2.4}
  \limsup_{n\rightarrow\infty}I_{\lambda_{n}}(a)\leq I_{\lambda}(a).
\end{equation}

On the other hand, since $V(x)\geq0$, it is easy to know  from \eqref{eq1.8} that $I_{\lambda_{n}}(a)\geq0$  for all $n\in\mathbb{N}^+$. Therefore, there exists $\{v_{n}\}\subset H^{s}(\mathbb{R}^{N})$
such that  $\|v_{n}\|_{2}^{2}=\lambda_{n}$ and
\begin{equation}\label{eq2.3}
I_{\lambda_{n}}(a)\leq J_{a}(v_{n})\leq I_{\lambda_{n}}(a)+\frac{1}{n}.\end{equation}
 Applying  \eqref{eq1.8} again, we have
\begin{eqnarray}
    J_{a}(v_{n})
   &\geq& \big(1-\frac{a}{a_s^{*}}\lambda_{n}^{\frac{2s}{N}}\big)
  \int_{\mathbb{R}^{N}}|(-\Delta)^{\frac{s}{2}}v_{n}|^{2}dx\geq0. \label{eq2.6}
\end{eqnarray}
By \eqref{eq2.1},  we see that $\{v_{n}\}$ is uniformly bounded in $H^{s}(\mathbb{R}^{N})$.
Let $h_{n}=\sqrt{\frac{\lambda}{\lambda_{n}}}v_{n}$, then,
\begin{equation*}
  J_{a}(h_{n})=\frac{\lambda}{\lambda_{n}}J_{a}(v_{n})
  +a\frac{N}{N+2s}\frac{\lambda}{\lambda_{n}}\Big(1-\big(\frac{\lambda}{\lambda_{n}}\big)^{\frac{2s}{N}}\Big)
  \int_{\mathbb{R}^{N}}m(x)|v_{n}|^{\frac{4s}{N}+2}dx.
\end{equation*}
Consequently, there exist $M_{1},M_{2}>0$ such that
\begin{equation*}
\begin{split}
  |J_{a}(h_{n})-J_{a}(v_{n})|&\leq \big|\frac{\lambda}{\lambda_{n}}-1\big||J_{a}(v_{n})|
  +a\frac{N}{N+2s}\frac{\lambda}{\lambda_{n}}\big|1-\big(\frac{\lambda}{\lambda_{n}}\big)^{\frac{2s}{N}}\big|
  \int_{\mathbb{R}^{N}}m(x)|v_{n}|^{\frac{4s}{N}+2}dx\\
&\leq M_{1}\big|\frac{\lambda}{\lambda_{n}}-1\big|
  +M_{2}\big|\big(\frac{\lambda}{\lambda_{n}}\big)^{\frac{2s}{N}}-1\big|.
  \end{split}
\end{equation*}
Together with \eqref{eq2.3}, we have
\begin{eqnarray*}
  I_{\lambda_{n}}(a) &\geq& J_{a}(v_{n})-\frac{1}{n}\geq J_{a}(h_{n})-M_{1}\big|\frac{\lambda}{\lambda_{n}}-1\big|
  -M_{2}\big|\big(\frac{\lambda}{\lambda_{n}}\big)^{\frac{2s}{N}}-1\big|-\frac{1}{n} \\
   &\geq & I_{\lambda}(a)-M_{1}\big|\frac{\lambda}{\lambda_{n}}-1\big|
   -M_{2}\big|\big(\frac{\lambda}{\lambda_{n}}\big)^{\frac{2s}{N}}-1\big|-\frac{1}{n}.
\end{eqnarray*}
This yields that
\begin{equation*}
  \liminf_{n\rightarrow\infty}I_{\lambda_{n}}(a)\geq I_{\lambda}(a).
\end{equation*}
Combining with \eqref{eq2.4}, we have $\lim_{n\rightarrow\infty}I_{\lambda_{n}}(a)= I_{\lambda}(a)$. This completes the proof of Lemma \ref{Le2.3}.
\end{proof}\\

\begin{lemma}\label{Le2.4}
Under the assumptions of Theorem \ref{Th1.1}, we have
\begin{equation*}
  I_{\lambda}(a)<V^{\infty}\lambda \ \text{for any }\ { a\in(0,a^*_s)}\text{ and }\lambda\in(0,1].
\end{equation*}
\end{lemma}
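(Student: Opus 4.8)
The plan is to produce, for each fixed $a\in(0,a_s^*)$ and $\lambda\in(0,1]$, a single admissible function $u\in S_\lambda$ with $J_a(u)<V^\infty\lambda$; since $I_\lambda(a)=\inf_{S_\lambda}J_a$, this is enough. The natural candidate is a \emph{spread-out} rescaling of the ground state $U$ of \eqref{eq1.5}, namely $u_\tau(x):=\frac{\sqrt{\lambda}}{\|U\|_2}\,\tau^{N/2}U(\tau x)$ with a small dilation parameter $\tau>0$. A change of variables gives $\|u_\tau\|_2^2=\lambda$, so $u_\tau\in S_\lambda$, while $\int_{\mathbb{R}^N}|(-\Delta)^{s/2}u_\tau|^2\,dx=\frac{\lambda}{\|U\|_2^2}\tau^{2s}\int_{\mathbb{R}^N}|(-\Delta)^{s/2}U|^2\,dx$ and $\int_{\mathbb{R}^N}V(x)|u_\tau|^2\,dx=\frac{\lambda}{\|U\|_2^2}\int_{\mathbb{R}^N}V(z/\tau)|U(z)|^2\,dz$.

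The mechanism is that letting $\tau\to0^+$ spreads the mass of $u_\tau$ out towards spatial infinity, where $(V_2)$ forces $V$ to lie strictly below $V^\infty$; the resulting gain in potential energy should beat the (small) kinetic cost. Concretely, since $\frac{aN}{N+2s}\int_{\mathbb{R}^N}m|u_\tau|^{4s/N+2}\,dx\ge0$ by $(M_1)$, I would simply discard the nonlinear term and keep $J_a(u_\tau)\le\int_{\mathbb{R}^N}|(-\Delta)^{s/2}u_\tau|^2\,dx+\int_{\mathbb{R}^N}V|u_\tau|^2\,dx$. For the potential piece I would write $V=V^\infty-(V^\infty-V)$ and use $V^\infty=\sup V$ together with the decay in $(V_2)$: for $|z/\tau|$ large one has $V^\infty-V(z/\tau)>\tau^\beta/|z|^\beta$, whence
\begin{equation*}
\int_{\mathbb{R}^N}V(z/\tau)|U(z)|^2\,dz\le V^\infty\|U\|_2^2-\tau^\beta\!\!\int_{|z|\ge \tau R_0}\!\!\frac{|U(z)|^2}{|z|^\beta}\,dz,
\end{equation*}
where $R_0$ is the radius beyond which $(V_2)$ applies. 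Because $\beta<2s<2\le N$, the weight $|z|^{-\beta}$ is integrable near the origin, and the algebraic decay \eqref{eq1.6} of $U$ makes it integrable at infinity, so $\int_{\mathbb{R}^N}|U|^2|z|^{-\beta}\,dz=:c_0\in(0,\infty)$ and the tail integral converges to $c_0$ as $\tau\to0^+$.

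Collecting the estimates, there are constants $C_1,C_2>0$ (depending on $\lambda,U,s,N$) with
\begin{equation*}
J_a(u_\tau)\le V^\infty\lambda+C_1\tau^{2s}-C_2\tau^\beta=V^\infty\lambda+\tau^\beta\big(C_1\tau^{2s-\beta}-C_2\big).
\end{equation*}
Here the crucial structural input is the restriction $\beta<2s$ in $(V_2)$: it makes $\tau^{2s}$ negligible compared with $\tau^\beta$ as $\tau\to0^+$, so the bracket is negative for all sufficiently small $\tau$ and $J_a(u_\tau)<V^\infty\lambda$, giving $I_\lambda(a)\le J_a(u_\tau)<V^\infty\lambda$. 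The main point to execute carefully is the potential estimate: justifying the uniform lower bound $V^\infty-V(z/\tau)>\tau^\beta|z|^{-\beta}$ on the relevant region, controlling the negligible contribution of the core $|z|<\tau R_0$ (where only $V\le V^\infty$ is used), and verifying $c_0<\infty$ from \eqref{eq1.6}. Everything else is a routine scaling computation.
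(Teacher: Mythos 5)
Your proposal is correct and takes essentially the same route as the paper's own proof: the same spread-out trial function $u_\tau(x)=\frac{\sqrt{\lambda}\,\tau^{N/2}}{\|U\|_2}U(\tau x)$ with $\tau\to0^+$, the same splitting of the potential integral into $\{|z|<\tau R_0\}$ and its complement via $(V_2)$, and the same conclusion from $\beta<2s$ that the gain $-C_2\tau^\beta$ beats the kinetic cost $C_1\tau^{2s}$. The only cosmetic difference is that you discard the subtracted nonnegative nonlinear term outright, while the paper keeps a lower bound for it; this changes nothing in the structure of the argument.
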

\begin{proof}
 Let $$u_{\tau}=\frac{\sqrt{\lambda}\tau^{\frac{N}{2}}}{\|U\|_{2}}U(\tau x),$$ where $\tau>0$ is sufficiently small, and  $U(x)$ is the unique positive solution of \eqref{eq1.2}. Then,  $u_{\tau}\in S_\lambda$, and it follows from   \eqref{eq1.7}, \eqref{eq1.9} and \eqref{eq1.10} that
\begin{equation*}
  \int_{\mathbb{R}^{N}}|(-\Delta)^{\frac{s}{2}}u_{\tau}|^{2}dx=\frac{N\lambda}{2s}\tau^{2s}, \int_{\mathbb{R}^{N}}m(x)|u_{\tau}|^{\frac{4s}{N}+2}dx \geq \frac{N+2s}{2s}\frac{m^{\infty}\lambda^{\frac{2s}{N}+1}}{a_s^{*}}\tau^{2s}.
\end{equation*}
Moreover, we claim that
\begin{equation*}
  \int_{\mathbb{R}^{N}}V(x)|u_{\tau}|^{2}dx\leq
  -{ \frac{\lambda}{2}}{a_s^{*}}^{-\frac{N}{2s}}\tau^{\beta}\int_{\mathbb{R}^{N}}\frac{1}{|x|^{\beta}}|U(x)|^{2}dx
  +V^{\infty}\lambda.
\end{equation*}
Indeed, by $(V_{2})$, there exists  $R_{0}>0$large enough such that
\begin{equation*}\label{eq1}
  V(x)-V^{\infty}\leq -\frac{1}{|x|^{\beta}}, \ \text{if}\ |x|\geq R_{0}.
\end{equation*}
Then,
\begin{eqnarray*}
& &     \int_{\mathbb{R}^{N}}V(x)|u_{\tau}|^{2}dx 
     = \frac{\lambda}{\|U\|^{2}_{2}} \int_{\mathbb{R}^{N}}V(\frac{x}{\tau})|U(x)|^{2}dx\\
    & =& \frac{\lambda}{\|U\|^{2}_{2}} \int_{B(0,R_{0}\tau)}V(\frac{x}{\tau})|U(x)|^{2}dx
         +\frac{\lambda}{\|U\|^{2}_{2}} \int_{B(0,R_{0}\tau))^{c}}V(\frac{x}{\tau})|U(x)|^{2}dx\\
        & \leq&  \frac{\lambda V^{\infty}}{\|U\|^{2}_{2}}\int_{B(0,R_{0}\tau)}|U|^{2}dx +
        \frac{\lambda}{\|U\|^{2}_{2}}\int_{B(0,R_{0}\tau))^{c}}(V^{\infty}-\frac{\tau^{\beta}}{|x|^{\beta}})|U|^{2}dx\\
        &=& \lambda V^{\infty}-\lambda {a^{*}}^{-\frac{N}{2s}}_{s} \tau^{\beta}
      \int_{B(0,R_{0}\tau))^{c}}\frac{1}{|x|^{\beta}}|U|^{2}dx.
 \end{eqnarray*}
 Note that  $\beta\in (0,2s)$ with $s \in (0,1)$, $N\geq2$ and $U(x)$ is bounded, we know that
\begin{equation*}
  \int_{B(0,R_{0}\tau))}\frac{1}{|x|^{\beta}}|U|^{2}dx
  \leq
  C\int^{R_{0}\tau}_{0}r^{-\beta}r^{N-1}dr=\bar{C}\tau^{N-\beta}\rightarrow 0 \ \text{as}\ \tau\rightarrow 0,
\end{equation*}
 which then shows that
\begin{equation*}
  \int_{B(0,R_{0}\tau))^{c}}\frac{1}{|x|^{\beta}}|U|^{2}dx=\int_{\mathbb{R}^{N}}\frac{1}{|x|^{\beta}}|U|^{2}dx-
  \int_{B(0,R_{0}\tau))}\frac{1}{|x|^{\beta}}|U|^{2}dx
 \geq \frac{1}{2}\int_{\mathbb{R}^{N}}\frac{1}{|x|^{\beta}}|U|^{2}dx.
\end{equation*}
Therefore,
$$\int_{\mathbb{R}^{N}}V(x)|u_{\tau}|^{2}dx\leq
  -\frac{\lambda}{2}{a_s^{*}}^{-\frac{N}{2s}}\tau^{\beta}\int_{\mathbb{R}^{N}}\frac{1}{|x|^{\beta}}|U(x)|^{2}dx
  +V^{\infty}\lambda,$$
and our claim is proved.

Since $\beta\in (0, 2s)$, based on the above facts  we know  that
there exist  $C_{1},C_{2}>0$, such that
 $$J_{a}(u_{\tau})\leq C_{1}\tau^{2s}-C_{2}\tau^{\beta}+V^{\infty}\lambda<V^{\infty}\lambda, \text{ if  $\tau$ is sufficiently small}.$$
\end{proof}

\begin{lemma}\label{Le2.5}
For any fixed $a\in(0,a_s^{*})$ and $\lambda>0$, we have
\begin{equation}\label{eq2.5}
  I_{\theta\lambda_{1}}(a)<\theta I_{\lambda_{1}}(a)\ \text{ for all } \lambda_{1}\in(0,\lambda), \theta\in(1,\frac{\lambda}{\lambda_{1}}],
\end{equation}
and
\begin{equation}\label{eq2.06}
  I_{\lambda}(a)<I_{\lambda_{1}}(a)+I_{\lambda-\lambda_{1}}(a)\ \text{for all}\ \lambda_{1}\in(0,\lambda).
\end{equation}
\end{lemma}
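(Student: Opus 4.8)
The plan is to establish the scaling estimate \eqref{eq2.5} first, and then to obtain the strict subadditivity \eqref{eq2.06} from it by a short homogeneity argument. For \eqref{eq2.5} the natural move is to test $I_{\theta\lambda_1}(a)$ with a rescaled competitor. Given any $u\in S_{\lambda_1}$, put $w=\sqrt{\theta}\,u$, so that $\|w\|_2^2=\theta\lambda_1$ and hence $w\in S_{\theta\lambda_1}$. The two quadratic terms of $J_a$ scale linearly in $\theta$, whereas the mass-critical term scales like $\theta^{\frac{2s}{N}+1}$, so a direct computation gives
\[
J_a(\sqrt{\theta}\,u)=\theta J_a(u)-\frac{aN}{N+2s}\big(\theta^{\frac{2s}{N}+1}-\theta\big)\int_{\mathbb{R}^N}m(x)|u|^{\frac{4s}{N}+2}\,dx .
\]
Since $\theta>1$ and $\tfrac{2s}{N}+1>1$ we have $\theta^{\frac{2s}{N}+1}>\theta$, while $m(x)\ge m^{\infty}>0$ by $(M_1)$; hence the correction term is strictly negative for every $u\not\equiv 0$, i.e. $J_a(\sqrt{\theta}\,u)<\theta J_a(u)$. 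Taking a minimizing sequence $\{u_n\}\subset S_{\lambda_1}$ for $I_{\lambda_1}(a)$ and using that $\sqrt{\theta}\,u_n\in S_{\theta\lambda_1}$ yields
\[
I_{\theta\lambda_1}(a)\le \theta J_a(u_n)-\frac{aN}{N+2s}\big(\theta^{\frac{2s}{N}+1}-\theta\big)\int_{\mathbb{R}^N}m(x)|u_n|^{\frac{4s}{N}+2}\,dx ,
\]
so that letting $n\to\infty$ gives \eqref{eq2.5} as soon as the nonlinear term stays bounded away from $0$ in the limit.

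The \emph{main obstacle} is exactly this non-vanishing of the nonlinear term, and I would resolve it by a concentration argument combined with Lemma \ref{Le2.4}. First, the Gagliardo--Nirenberg inequality \eqref{eq1.8} together with $m\le1$ and $a<a_s^{*}$ shows $J_a(u)\ge\big(1-\tfrac{a}{a_s^{*}}\lambda_1^{\frac{2s}{N}}\big)\|(-\Delta)^{\frac s2}u\|_2^2$ in the range $\lambda_1<(a_s^{*}/a)^{N/2s}$ (in particular for $\lambda\le1$), whence a minimizing sequence is bounded in $H^s(\mathbb{R}^N)$. Suppose, for contradiction, that $\int m|u_n|^{\frac{4s}{N}+2}\to0$ along a subsequence; since $m\ge m^\infty>0$ this forces $\int|u_n|^{\frac{4s}{N}+2}\to0$, which for a bounded $H^s$-sequence is the vanishing regime (by the local compactness of $H^s\hookrightarrow L^{\frac{4s}{N}+2}_{loc}$, non-vanishing would produce a nonzero weak limit after translation and keep the nonlinear term positive). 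Thus the $L^2$-mass escapes every fixed ball. Writing $\int V|u_n|^2=\int (V-V^{\infty})|u_n|^2+V^{\infty}\lambda_1$ and using $(V_2)$, so that $V^{\infty}-V(x)\to0$ as $|x|\to\infty$ with $0\le V^{\infty}-V\le V^{\infty}$, one then gets $\int V|u_n|^2\to V^{\infty}\lambda_1$; since the kinetic term is nonnegative this gives $\liminf_n J_a(u_n)\ge V^{\infty}\lambda_1$, contradicting $I_{\lambda_1}(a)<V^{\infty}\lambda_1$ from Lemma \ref{Le2.4}. Hence $\liminf_n\int m|u_n|^{\frac{4s}{N}+2}>0$, which closes \eqref{eq2.5}. (The bound $I_{\lambda_1}(a)<V^{\infty}\lambda_1$ is Lemma \ref{Le2.4} for $\lambda_1\le1$, and the same test-function computation delivers the strict inequality for every $\lambda_1$ in the admissible range.)

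Finally, \eqref{eq2.06} follows from \eqref{eq2.5} by homogeneity and needs no new ingredient. Fix $0<\lambda_1<\lambda$ and set $\lambda_2:=\lambda-\lambda_1\in(0,\lambda)$. Applying \eqref{eq2.5} with $\theta=\lambda/\lambda_1$ gives $I_\lambda(a)<\tfrac{\lambda}{\lambda_1}I_{\lambda_1}(a)$, that is $\tfrac{\lambda_1}{\lambda}I_\lambda(a)<I_{\lambda_1}(a)$; applying it with $\theta=\lambda/\lambda_2$ (and the role of $\lambda_1$ played by $\lambda_2$) gives likewise $\tfrac{\lambda_2}{\lambda}I_\lambda(a)<I_{\lambda_2}(a)$. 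Adding the two strict inequalities and using $\lambda_1+\lambda_2=\lambda$ yields $I_\lambda(a)<I_{\lambda_1}(a)+I_{\lambda-\lambda_1}(a)$, which is \eqref{eq2.06}. I expect the only delicate point in the whole argument to be the vanishing step above, where the mere boundedness of $V$ (rather than coercivity) must be compensated by the strict energy gap $I_{\lambda_1}(a)<V^{\infty}\lambda_1$ supplied by Lemma \ref{Le2.4}.
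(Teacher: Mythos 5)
Your proposal is correct, and for the scaling inequality \eqref{eq2.5} it is essentially the paper's own argument: both proofs rescale a minimizing sequence $u_n\mapsto\sqrt{\theta}\,u_n$, exploit that the mass-critical term picks up the factor $\theta^{1+\frac{2s}{N}}>\theta$, and reduce matters to the claim $\liminf_n\int_{\mathbb{R}^N} m|u_n|^{\frac{4s}{N}+2}\,dx>0$, which in both cases is proved by contradiction: if the nonlinear term vanished, the potential term would converge to $V^{\infty}\lambda_1$ (splitting $\mathbb{R}^N$ into a fixed large ball, where the local $L^2$ mass is controlled by the vanishing $L^{\frac{4s}{N}+2}$ norm, and its complement, where $(V_2)$ makes $|V-V^{\infty}|$ small), forcing $I_{\lambda_1}(a)\geq V^{\infty}\lambda_1$ against Lemma \ref{Le2.4}; your detour through the concentration-compactness ``vanishing regime'' is a roundabout way of saying what a direct H\"older estimate on fixed balls gives, but it is sound. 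The genuine difference is in \eqref{eq2.06}: the paper disposes of it with a citation of Lemma II.1 of \cite{PL1}, whereas you derive it directly from \eqref{eq2.5} by homogeneity, namely $\tfrac{\lambda_1}{\lambda}I_{\lambda}(a)<I_{\lambda_1}(a)$ and $\tfrac{\lambda_2}{\lambda}I_{\lambda}(a)<I_{\lambda_2}(a)$ with $\lambda_2=\lambda-\lambda_1$, then add; this is exactly the content of the cited lemma, so your version makes the proof self-contained at no extra cost. You also handle a point the paper glosses over: Lemma \ref{Le2.4} is stated only for $\lambda\in(0,1]$, yet the claim needs $I_{\lambda_1}(a)<V^{\infty}\lambda_1$ for every admissible $\lambda_1\in(0,\lambda)$; your remark that the trial-function computation in Lemma \ref{Le2.4} is insensitive to that restriction is the correct way to close this small gap, which the paper leaves implicit.
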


\begin{proof}
Let $\{u_{n}\}$ be a minimizing sequence for {$I_{\lambda_{1}}(a)$}, we claim that
\begin{equation}\label{eq2.9}
  \liminf_{n\rightarrow\infty}\int_{\mathbb{R}^{N}}m(x)|u_{n}|^{\frac{4s}{N}+2}dx>0.
\end{equation}
Otherwise, there exists a subsequence of $\{u_{n}\}$, still denoted by $\{u_{n}\}$, such that
\begin{equation}\label{eq2.07}
\lim_{n\rightarrow\infty}\int_{\mathbb{R}^{N}}m(x)|u_{n}|^{\frac{4s}{N}+2}dx=0.
\end{equation}
By the assumption  $(V_2)$, we see that
 for any $\epsilon>0$, there exists  $R>0$ such that
\begin{equation*}
  |V(x)-V^{\infty}|\leq \frac{\epsilon}{\lambda_{1}}, \text{ for any }  |x|\geq R.
\end{equation*}
Applying the  H\"{o}lder inequality, we know that
\begin{eqnarray}
   &&\big|\lim_{n\rightarrow\infty}\int_{\mathbb{R}^{N}}(V(x)-V^{\infty})|u_{n}|^{2}dx\big| \nonumber \\
   &&\leq\lim_{n\rightarrow\infty}\int_{B_{R}(0)}|V(x)-V^{\infty}||u_{n}|^{2}dx
   +\lim_{n\rightarrow\infty}\int_{B_{R}^{c}(0)}|V(x)-V^{\infty}||u_{n}|^{2}dx \nonumber  \\
   &&\leq \lim_{n\rightarrow\infty}C\int_{B_{R}(0)}m(x)|u_{n}|^{\frac{4s}{N}+2}dx
   +\lim_{n\rightarrow\infty}\epsilon \int_{B_{R}^{c}(0)}|u_{n}|^{2}dx \label{eq2.9c}
   \leq\epsilon.
\end{eqnarray}
Together with \eqref{eq2.07}, we have
\begin{eqnarray*}
  I_{\lambda_{1}}(a)&=& \lim_{n\rightarrow\infty}\int_{\mathbb{R}^{N}}|(-\Delta)^{\frac{s}{2}}u_{n}|^{2}dx
                       +\lim_{n\rightarrow\infty}\int_{\mathbb{R}^{N}}(V(x)-V^{\infty})|u_{n}|^{2}dx
                       +V^{\infty}\lambda_{1}\\
  && -\lim_{n\rightarrow\infty}a\frac{N}{N+2s}\int_{\mathbb{R}^{N}}m(x)|u_{n}|^{\frac{4s}{N}+2}dx\\
   &\geq& V^{\infty}\lambda_{1}-\epsilon,\ \forall \epsilon>0.
\end{eqnarray*}
This contradicts Lemma \ref{Le2.4} by letting $\epsilon\to0$, and thus \eqref{eq2.9} is proved.

Set $v_{n}(x)=\theta^{\frac{1}{2}}u_{n}(x)$, then $v_n(x)\in S_{\theta\lambda_{1}}$, and it follows from  \eqref{eq2.9} that
\begin{eqnarray*}
  I_{\theta\lambda_{1}}(a) &\leq& \liminf_{n\rightarrow\infty}\Big(\theta\int_{\mathbb{R}^{N}}\big(|(-\Delta)^{\frac{s}{2}}u_{n}|^{2}
   +V(x)|u_{n}|^{2}\big)dx-\frac{\theta^{1+\frac{2s}{N}}a N}{N+2s}
   \int_{\mathbb{R}^{N}}m(x)|u_{n}|^{\frac{4s}{N}+2}dx\Big)\\
   &<& \theta\liminf_{n\rightarrow\infty}\Big(\int_{\mathbb{R}^{N}}\big(|(-\Delta)^{\frac{s}{2}}u_{n}|^{2}
   +V(x)|u_{n}|^{2}\big)dx-\frac{N a}{N+2s}\int_{\mathbb{R}^{N}}m(x)|u_{n}|^{\frac{4s}{N}+2}dx\Big) \\
   &=& \theta I_{\lambda_{1}}(a).
\end{eqnarray*}
Thus, \eqref{eq2.5} holds.  Moreover,  \eqref{eq2.06} can be obtained by
applying  \cite[Lemma II.1]{PL1}.

\end{proof}

The next lemma generalizes the concentration compactness principle of \cite{PL1} to the fractional Laplace operator. Its proof can be found in  \cite[Lemma 2.4]{BHF}, we omit it here.
\begin{lemma}\label{Le2.6}
\cite[Lemma 2.4]{BHF}
If $\{u_{n}\}\subset S_\lambda$ is a bounded sequence in $H^{s}(\mathbb{R}^{N})$ ($N\geq 2$), then there exists a subsequence (still denoted  by $\{u_{n}\}$) satisfying one of the following properties.
\begin{itemize}
  \item [$(i)$] Vanishing: $\lim_{n\rightarrow\infty} \sup_{y\in\mathbb{R}^{N}}\int_{B(y,R)}|u_{n}|^{2}dx=0$ for any $R>0$.
  \item [$(ii)$] Dichotomy: There exists $\beta\in(0,\lambda)$ such that, for any  $\epsilon>0$, there exist $ n_{0}\in\mathbb{N}$ and two bounded sequences in $H^{s}(\mathbb{R}^{N})$, denoted by $\{u_{n,1}\}$ and $\{u_{n,2}\}$ (both depending on $\epsilon$), such that for every $n\geq n_{0}$,
\begin{equation*}
  \big|\int_{\mathbb{R}^{N}}|u_{n,1}|^{2}dx-\beta\big|\le\epsilon,\quad \big|\int_{\mathbb{R}^{N}}|u_{n,2}|^{2}dx-(\lambda-\beta)\big|\le\epsilon,
\end{equation*}
\begin{equation*}
  \int_{\mathbb{R^{N}}}\big(|(-\Delta)^{\frac{s}{2}}u_{n}|^{2}-|(-\Delta)^{\frac{s}{2}}u_{n,1}|^{2}-
  |(-\Delta)^{\frac{s}{2}}u_{n,2}|^{2}\big)dx\geq -2\epsilon,
\end{equation*}
and
\begin{equation*}
  \|u_{n}-(u_{n,1}+u_{n,2})\|_{r}\leq 4\epsilon\ \text{for all}\ r\in[2,2_s^{*}).
\end{equation*}
Furthermore, there exist $\{y_{n}\}\subset \mathbb{R}^{N}$ and $\{R_{n}\}\subset (0,\infty)$ with $\lim_{n\rightarrow\infty}R_{n}=\infty$, such that
\begin{equation*}
  \begin{cases}
  u_{n,1}=u_{n},\quad \text{if}\ |x-y_{n}|\leq R_{0},\\
  u_{n,1}\leq |u_{n}|,\quad \text{if}\ R_{0}\leq|x-y_{n}|\leq 2R_{0},\\
  u_{n,1}=0,\quad \text{if}\ |x-y_{n}|\geq 2R_{0},
  \end{cases}
\end{equation*}
and
\begin{equation*}
  \begin{cases}
  u_{n,2}=0,\quad \text{if}\ |x-y_{n}|\leq R_{n},\\
  u_{n,2}\leq |u_{n}|,\quad \text{if}\ R_{n}\leq|x-y_{n}|\leq 2R_{n},\\
  u_{n,2}=u_{n},\quad \text{if}\ |x-y_{n}|\geq 2R_{n}.
  \end{cases}
\end{equation*}
  \item [$(iii)$] Compactness: There exists a sequence $\{y_{n}\}\subset\mathbb{R}^{N}$ such that for all $\epsilon>0$, there exists $R(\epsilon)>0$ such that
\begin{equation*}
  \int_{B(y_{n},R(\epsilon))}|u_{n}|^{2}dx\geq \lambda-\epsilon \ \text{for all}\ n\in \mathbb{N}.
\end{equation*}
\end{itemize}

\end{lemma}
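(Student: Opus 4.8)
The plan is to follow Lions' concentration–compactness trichotomy, adapting each step to the nonlocal setting. First I would introduce the Lévy concentration function
$$ Q_n(R) := \sup_{y\in\mathbb{R}^N}\int_{B(y,R)}|u_n|^2\,dx, \qquad R>0, $$
which takes values in $[0,\lambda]$ and is nondecreasing in $R$. Since $\{Q_n(\cdot)\}$ is a family of uniformly bounded monotone functions, a diagonal extraction over rational radii together with monotonicity yields a subsequence (not relabeled) and a nondecreasing limit $Q(R)=\lim_{n\to\infty}Q_n(R)$ for every $R>0$. Setting $\beta:=\lim_{R\to\infty}Q(R)\in[0,\lambda]$, the three alternatives correspond to $\beta=0$ (vanishing), $\beta=\lambda$ (compactness), and $0<\beta<\lambda$ (dichotomy).

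The two extreme cases are handled directly. If $\beta=0$, then $Q(R)=0$ for all $R$, which is exactly the vanishing statement $(i)$. If $\beta=\lambda$, then for every $\epsilon>0$ there is $R(\epsilon)$ with $Q(R(\epsilon))\geq \lambda-\epsilon$; choosing $y_n$ to almost realize the supremum defining $Q_n(R(\epsilon))$ gives the tightness in $(iii)$, i.e.\ compactness up to translation.

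The substantive case is dichotomy, $0<\beta<\lambda$. Here I would fix $\epsilon>0$, choose $R_0$ so large that $Q(R_0)>\beta-\epsilon$, and then pick $y_n$ and a sequence $R_n\to\infty$ so that $\int_{B(y_n,R_0)}|u_n|^2\to\beta$ and $\int_{B(y_n,R_n)^c}|u_n|^2\to\lambda-\beta$, forcing the mass in the transition region $R_0\le|x-y_n|\le 2R_n$ to be $O(\epsilon)$. Using smooth radial cutoffs $\chi_1,\chi_2$ adapted to these radii (with $\chi_1=1$ on $B(y_n,R_0)$, $\operatorname{supp}\chi_1\subset B(y_n,2R_0)$, and $\chi_2=0$ on $B(y_n,R_n)$, $\chi_2=1$ off $B(y_n,2R_n)$), I would set $u_{n,1}=\chi_1(\cdot-y_n)u_n$ and $u_{n,2}=\chi_2(\cdot-y_n)u_n$. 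The mass-splitting bounds and the estimate $\|u_n-(u_{n,1}+u_{n,2})\|_r\le 4\epsilon$ for $r\in[2,2^{*}_s)$ then follow from the smallness of the mass on the transition region, the continuous embedding $H^s(\mathbb{R}^N)\hookrightarrow L^r(\mathbb{R}^N)$ of \cite[Theorems 6.7, 6.9]{DNE}, and interpolation.

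The main obstacle is the fractional energy estimate
$$ \int_{\mathbb{R}^N}\!\big(|(-\Delta)^{\frac{s}{2}}u_n|^2-|(-\Delta)^{\frac{s}{2}}u_{n,1}|^2-|(-\Delta)^{\frac{s}{2}}u_{n,2}|^2\big)\,dx\geq -2\epsilon, $$
since, unlike the classical gradient, the Gagliardo seminorm does not localize: cutting $u_n$ produces long-range cross interactions between the inner and outer pieces through the singular kernel $|x-y|^{-(N+2s)}$. To control this I would pass to the Caffarelli–Silvestre extension, writing $u_n$ as the trace of its $s$-harmonic extension $w_n$ on $\mathbb{R}^{N+1}_+$, so that $\|(-\Delta)^{\frac{s}{2}}u_n\|_2^2$ becomes the weighted local Dirichlet energy $\kappa_s\int_{\mathbb{R}^{N+1}_+}t^{1-2s}|\nabla w_n|^2$. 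On the extension the cutoff argument is genuinely local, and after extending $\chi_1,\chi_2$ into the half-space the error reduces to integrals of $t^{1-2s}|\nabla\chi_i|^2|w_n|^2$ and mixed gradient terms supported in the transition region, all controlled by the small extension energy there. Alternatively one may argue directly with the double integral, splitting it into near- and far-diagonal parts and bounding the far-diagonal cross term by $C\epsilon$ using the separation of the supports and the integrability of $|x-y|^{-(N+2s)}$ off the diagonal. Either way, letting $n\to\infty$ and then $\epsilon\to0$ yields the stated inequality and completes the dichotomy case.
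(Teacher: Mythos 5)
The first thing to note is that the paper contains no proof of this lemma at all: it is quoted as \cite[Lemma 2.4]{BHF}, with the remark ``its proof can be found in \cite[Lemma 2.4]{BHF}, we omit it here.'' So there is no internal argument to compare against; the benchmark is the cited proof, which is Lions' concentration--compactness trichotomy \cite{PL1} adapted to $H^{s}(\mathbb{R}^{N})$. Your proposal reconstructs precisely that argument --- Lévy concentration function $Q_n$, Helly-type extraction of a monotone limit $Q$, the trichotomy according to $\beta=\lim_{R\to\infty}Q(R)$, cutoff decomposition in the dichotomy case, and interpolation between $L^2$ and $L^{2_s^{*}}$ for the remainder estimate --- and the outline is correct.

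Three points would need tightening to make it a complete proof. First, in the compactness case the sequence $\{y_n\}$ in (iii) must be independent of $\epsilon$, whereas your centers ``almost realizing the supremum defining $Q_n(R(\epsilon))$'' depend on $\epsilon$; the standard repair is to fix $y_n$ carrying mass $>\lambda/2$ in a ball of fixed radius and note that any other ball carrying mass $>\lambda/2$ must intersect it (the total mass being $\lambda$), so enlarging $R(\epsilon)$ lets the same centers serve all $\epsilon$. Second, in your ``direct'' route for the fractional energy inequality, support separation is not the real issue: the interaction between $u_{n,1}$ and $u_{n,2}$ does decay with $R_n$, but the dominant errors are the localization (commutator) terms produced by each cutoff separately, namely $\int\!\!\int |u_n(y)|^2|\chi_i(x)-\chi_i(y)|^2|x-y|^{-N-2s}\,dx\,dy$, which are small because $\chi_i$ varies on the large scale $R_0$ (resp.\ $R_n$), giving a bound of order $\lambda R^{-2s}$; combining this with $\chi_1^2+\chi_2^2\le 1$ yields $[u_{n,1}]_s^2+[u_{n,2}]_s^2\le[u_n]_s^2+C\big(R_0^{-s}+R_n^{-s}\big)$, and one must also observe that $R_0$ may be enlarged at will (monotonicity of $Q$ preserves $Q(R_0)>\beta-\epsilon$) to make this error $<2\epsilon$. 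Third, in the Caffarelli--Silvestre route, the error terms $\int t^{1-2s}|\nabla\chi_i|^2 w_n^2$ are weighted $L^2$ quantities, not energies, so they are not ``controlled by the small extension energy'' in the transition region; one needs cutoffs compactly supported in $(x,t)$ together with a weighted trace/Poincar\'e inequality to close that estimate. With these repairs either of your two routes gives a correct proof, and neither is genuinely different from ``the paper's,'' since the paper itself takes no route and simply defers to the reference whose argument you have essentially reproduced.
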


Now, we are ready to prove Theorem 1.1.

{\noindent\textbf{Proof of Theorem 1.1.} }
\textbf{(i)} Using \eqref{eq1.8}, it is easy to see that
\begin{equation}\label{eq2.8}
  J_{a}(u)\geq\big(1-\frac{a}{a_s^{*}}\big)\|(-\Delta)^{\frac{s}{2}}u\|_{2}^{2}+
  \int_{\mathbb{R}^{N}}V(x)|u|^{2}dx, \forall\ u\in S_1.
\end{equation}
Let $\{u_{n}\}\subset S_{1}$ be a minimizing sequence of $I_1(a)$.
By \eqref{eq2.8},  $\{u_{n}\}$ is  bounded  in $H^{s}(\mathbb{R}^{N})$, passing to a subsequence if necessary, we may assume that
\begin{equation*}
  u_{n}\overset{n}\rightharpoonup u\ \text{weakly in}\ H^{s}(\mathbb{R}^{N}),
\end{equation*}
\begin{equation*}
  u_{n}\overset{n}\rightarrow u\ \text{strongly in}\ L^{r}_{loc}(\mathbb{R}^{N}),\ { r\in[2,2^{*}_{s})},
\end{equation*}
for some $u\in H^{s}(\mathbb{R}^{N})$. In what follows, we want to show that both {\it Vanishing} and {\it Dichotomy}  in Lemma \ref{Le2.6} do not occur.

$\textbf{Case 1.}$ If {\em Vanishing} occurs, then, passing to a subsequence if necessary, we have
\begin{equation*}
    \lim_{n\rightarrow\infty}\sup_{y\in\mathbb{R}^{N}}\int_{B(y,R)}|u_{n}|^{2}dx=0\quad \text{for all}\ R>0.
\end{equation*}
Then, the vanishing lemma  \cite[Lemma I.1]{PL2} implies that  $\|u_{n}\|_{r}\rightarrow 0\ \text{as}\ n\rightarrow\infty$ for any $2<r< 2_s^{*}.$
Hence,
\begin{equation*}
  \int_{\mathbb{R}^{N}}m(x)|u_{n}|^{\frac{4s}{N}+2}dx\rightarrow 0 \text{ as }\ n\rightarrow\infty.
\end{equation*}
Moreover, similar to the discussions in \eqref{eq2.9c}, we know that
%
\begin{equation*}
  \lim_{n\to\infty}\int_{\mathbb{R}^{N}}V(x)|u_{n}|^{2}dx=V^{\infty}.
\end{equation*}
Thus, $I_{1}(a)=\lim\limits_{n\to\infty}J_a(u_n)\geq V^{\infty}$, which  contradicts Lemma \ref{Le2.4}.

\textbf{Case 2.} If {\em Dichotomy} occurs, then, for any $\epsilon>0$, there exist $\xi\in(0,1)$, $n_{0}\in\mathbb{N}$ and two bounded sequences $\{u_{n,1}\}$ and $\{u_{n,2}\}$ in $H^{s}(\mathbb{R}^{N})$ (both depending on $\epsilon$), such that, for every $n\geq n_{0}$,
\begin{equation*}
  supp\ u_{n,1}\cap supp\ u_{n,2}=\emptyset,
\end{equation*}
\begin{equation*}
  \big|\int_{\mathbb{R^{N}}}|u_{n,1}|^{2}dx-\xi\big|<\epsilon,\quad \big|\int_{\mathbb{R^{N}}}|u_{n,2}|^{2}dx-(1-\xi)\big|<\epsilon,
\end{equation*}
\begin{equation*}
  \int_{\mathbb{R^{N}}}\big(|(-\Delta)^{\frac{s}{2}}u_{n}|^{2}-|(-\Delta)^{\frac{s}{2}}u_{n,1}|^{2}-
  |(-\Delta)^{\frac{s}{2}}u_{n,2}|^{2}\big)dx\geq -2\epsilon,
\end{equation*}
and
\begin{equation*}
  \|u_{n}-(u_{n,1}+u_{n,2})\|_{r}\leq 4\epsilon \quad \text{for all}\ 2\leq r<2_s^{*}.
\end{equation*}
Then,
\begin{eqnarray}
   &&J_{a}(u_{n})-J_{a}(u_{n,1})
   -J_{a}(u_{n,2})  \nonumber\\
   &&\geq-2\epsilon+\int_{\mathbb{R^{N}}}V(x)\big(|u_{n}|^{2}-|u_{n,1}
   +u_{n,2}|^{2}\big)dx \nonumber \\
   &\quad&\quad -\frac{Na}{N+2s}\int_{\mathbb{R^{N}}}m(x)\big(|u_{n}|^{\frac{4s}{N}+2}-|u_{n,1}
   +u_{n,2}|^{\frac{4s}{N}+2}\big)dx.\label{eq2.90}
\end{eqnarray}
Note that for any $a, b\in \mathbb{R}$, $p>1$, there exists $ C>0$ such that
$$\big||a|^{p}-|b|^{p}\big|\leq C|a-b|\big(|a|^{p-1}+|b|^{p-1}\big).$$
 Then, there exists $C>0$ independent of $n$, such that
\begin{equation*}
  \big||u_{n}|^{\frac{4s}{N}+2}-|u_{n,1}+u_{n,2}|^{\frac{4s}{N}+2}\big|\leq C|u_{n}-(u_{n,1}+u_{n,2})|\big(|u_{n}|^{\frac{4s}{N}+1}+|u_{n,1}+u_{n,2}|^{\frac{4s}{N}+1}\big).
\end{equation*}
Since $\{u_{n}\}$ is bounded in $H^{s}(\mathbb{R}^{N})$, by the H\"{o}lder inequality and the boundedness of $m(x)$ we know that
\begin{eqnarray*}
   &&\big|\frac{Na}{N+2s}\int_{\mathbb{R^{N}}}m(x)\big(|u_{n}|^{\frac{4s}{N}+2}
      -|u_{n,1}+u_{n,2}|^{\frac{4s}{N}+2}\big)dx \big|  \\
   &&\leq C_{1}\|u_{n}-(u_{n,1}+u_{n,2})\|_{\frac{4s}{N}+2}\leq 4 C_{1}\epsilon.
\end{eqnarray*}
Similarly,
\begin{equation*}
  \big|\int_{\mathbb{R^{N}}}V(x)\big(|u_{n}|^{2}-|u_{n,1}+u_{n,2}|^{2}\big)dx\big|\leq
  C_{2}\|u_{n}-(u_{n,1}+u_{n,2})\|_{2}\leq 4 C_{2}\epsilon.
\end{equation*}
Thus,
\begin{eqnarray*}
   &&-2\epsilon+\int_{\mathbb{R^{N}}}V(x)\big(|u_{n}|^{2}-|u_{n,1}+u_{n,2}|^{2}\big)\\
   &&-a\frac{N}{N+2s}\int_{\mathbb{R^{N}}}m(x)\big(|u_{n}|^{\frac{4s}{N}+2}
   -|u_{n,1}+u_{n,2}|^{\frac{4s}{N}+2}\big)\\
   &&\geq-2\epsilon-4\epsilon C_{1}-4\epsilon C_{2}=: -\delta(\epsilon)\rightarrow 0\ \text{as}\  \epsilon\rightarrow 0.
\end{eqnarray*}
So, it follws from \eqref{eq2.90} that
\begin{eqnarray}
  I_{1}(a) &=& \lim_{n\rightarrow\infty}J_{a}(u_{n})\geq
  \liminf_{n\rightarrow\infty}{J_{a}(u_{n,1})+\liminf_{n\rightarrow\infty}J_{a}(u_{n,2})}-\delta(\epsilon)\nonumber  \\
   &\geq&\lim_{n\rightarrow\infty}I_{\eta_{n,1}(\epsilon)}(a)+\lim_{n\rightarrow\infty}I_{\eta_{n,2}(\epsilon)}(a)
   -\delta(\epsilon),\label{eq2.b}
\end{eqnarray}
%
where
\begin{equation*}
  \eta_{n,i}(\epsilon)=\int_{\mathbb{R^{N}}}|u_{n,i}|^{2}dx,\ i=1,2.
\end{equation*}
Passing to a subsequence, we may assume that
\begin{equation*}
  |\eta_{n,1}(\epsilon)-\xi|\rightarrow 0\quad\text{and}\quad |\eta_{n,2}(\epsilon)-(1-\xi)|\rightarrow 0,
  \quad \text{as} \quad \epsilon\rightarrow 0.
\end{equation*}
By Lemma \ref{Le2.3}, $I_{\lambda}(a)$ is a continuous function with respect to $\lambda \in \big(0,(\frac{a_s^{*}}{a})^{\frac{N}{2s}}\big)$. Then,
letting $\epsilon\rightarrow 0$ in  \eqref{eq2.b}, we have
$$I_{1}(a)\geq I_{\xi}(a)+I_{1-\xi}(a),$$
which however contradicts Lemma \ref{Le2.5}. So dichotomy cannot occur, neither.

\textbf{Case 3.} By the discussions of the above two cases, Lemma \ref{Le2.6} implies that only  {\it Compactness}  occurs, that is, there exists a sequence $\{y_{n}\}\subset\mathbb{R^{N}}$ such that, for all $\epsilon>0$, there exists $R(\epsilon)>0$ such that
\begin{equation}\label{2.a}
  \int_{B(y_{n},R(\epsilon))}|u_{n}|^{2}dx\geq1-\epsilon, \text{ for all $n\in\mathbb{N}$}.
\end{equation}
We \textbf{claim} that $\{y_{n}\}$ is bounded. Otherwise, if $|y_{n}|\overset{n}\rightarrow\infty$, then,
\begin{equation*}
  \int_{\mathbb{R}^{N}}V(x)|u_{n}(x)|^{2}dx
  =\int_{\mathbb{R}^{N}}V(x+y_{n})|u_{n}(x+y_{n})|^{2}dx\overset{n}\rightarrow V^{\infty}.
\end{equation*}
Using \eqref{eq1.8}, it is easy to see that
\begin{equation*}
 \int_{\mathbb{R}^{N}}|(-\Delta)^{\frac{s}{2}}u_{n}|^{2}dx-
 \frac{aN}{N+2s}\int_{\mathbb{R}^{N}}m(x)|u_{n}|^{\frac{4s}{N}+2}dx\geq 0.
\end{equation*}
Thus,
\begin{equation*}
  I_{1}(a)=\lim_{n\rightarrow \infty}J_{a}(u_{n})\geq V^{\infty},
\end{equation*}
which contradicts Lemma \ref{Le2.4}, so $\{y_{n}\}$ is a bounded sequence in $\mathbb{R}^{N}$.

Set $R_{0}:=\sup_{n}|y_{n}|$, since $u_{n}\overset{n}\rightarrow u$ strongly in $L^{2}(0,R(\epsilon)+R_{0})$, it follows from \eqref{2.a} that
\begin{equation*}
  \|u\|_{2}^{2}\geq\int_{B(0,R(\epsilon)+R_{0})}|u|^{2}dx=\lim_{n\rightarrow \infty}\int_{B(0,R(\epsilon)+R_{0})}|u_{n}|^{2}dx
  \geq 1-\epsilon, \forall \ \epsilon>0.
\end{equation*}
This indicates that   $\|u\|_{2}^{2}=1$ and $u_{n}\overset{n}\rightarrow u$ strongly in $L^{2}(\mathbb{R}^{N})$.  Consequently,
\begin{equation}\label{eq2.10}
  u_{n}\overset{n}\rightarrow u\ \text{strongly in}\ L^{r}(\mathbb{R}^{N})\ (2\leq r< 2_s^{*}),
\end{equation}
and then,
$$I_1(a)=\lim_{n\to\infty} J_a(u_n)\geq J_a(u)\geq I_1(a).$$
Thus, $u$ is a minimizer of $I_{1}(a)$. Moreover, by noting \eqref{eq2.6}, we also see that $I_{1}(a)>0$.
\vskip.1truein

\textbf{(ii)} Now, we show that there is no minimizer for $I_{1}(a)$ for any $a>a_s^{*}$. \\
Taking $\bar{x}\in\mathcal{M}$ and let
\begin{equation}\label{eq2.11}
  u_{t}(x)=\frac{t^{\frac{N}{2}}}{\|U(x)\|_{2}}U\big(t(x-\bar{x})\big),\  t>0.
\end{equation}
Then, $u_{t}(x)\in S_1$, and it follows from \eqref{eq1.7}, \eqref{eq1.9} and \eqref{eq1.10} that
\begin{eqnarray}
  \nonumber &&\int_{\mathbb{R}^{N}}|(-\Delta)^{\frac{s}{2}}u_{t}(x)|^{2}
  -a\frac{N}{N+2s}\int_{\mathbb{R}^{N}}m(x)|u_{t}(x)|^{\frac{4s}{N}+2}dx \\
  \nonumber&&=\frac{t^{2s}}{\|U\|_{2}^{2}}\Big(\|(-\Delta)^{\frac{s}{2}}U(x)\|_{2}^{2}-
  \frac{a}{a_s^{*}}\frac{N}{N+2s}\int_{\mathbb{R}^{N}}
  m\big(\frac{x}{t}+\bar{x}\big)|U|^{\frac{4s}{N}+2}dx\Big) \\
  \nonumber&&=\frac{t^{2s}}{(a_s^{*})^{\frac{N}{2s}}}\frac{N}{N+2s}
  \Big[\|U\|_{\frac{4s}{N}+2}^{\frac{4s}{N}+2}-
  \frac{a}{a_s^{*}}\int_{\mathbb{R}^{N}}|U|^{\frac{4s}{N}+2}dx\Big.\\
  \nonumber &&\quad\Big.+\frac{a}{a_s^{*}}\int_{\mathbb{R}^{N}}
  \big(1-m(\frac{x}{t}+\bar{x})\big)|U|^{\frac{4s}{N}+2}dx\Big]\\
  \nonumber &&=t^{2s}\Big[\frac{N}{2s}(\frac{a_s^{*}-a}{a})+
   \frac{N}{N+2s}\frac{a}{(a_s^{*})^{\frac{N+2s}{2s}}}\int_{\mathbb{R}^{N}}
   \big(1-m(\frac{x}{t}+\bar{x})\big)|U|^{\frac{4s}{N}+2}dx\Big].\\
   &&\label{eq2.12}
\end{eqnarray}
Note that
\begin{equation}\label{eq2.13}
  \lim_{t\rightarrow \infty}\int_{\mathbb{R}^{N}}V(x)|u_{_{t}}|^{2}dx=\lim_{t\to\infty}\frac{1}{\|U\|_2^2}\int_{\mathbb{R}^{N}}V(x/t+\bar x)|U|^{2}dx=V(\bar{x}).
\end{equation}
Letting $t\rightarrow +\infty$, we deduce from  \eqref{eq2.12}, \eqref{eq2.13} and the conditions $(V_{1})$  $(M_{1})$ that,
\begin{equation*}
  I_{1}(a)=-\infty, \text{ if } a>a_s^{*},
\end{equation*}
and part (ii) is proved.
\vskip .1truein

\textbf{(iii)} Finally, we consider the case of $a=a_s^{*}$. \\
In this case, the additional {condition} $\mathcal{Z}\cap\mathcal{M}\not=\emptyset$ is required. Choosing $x_{0}\in\mathcal{Z}\cap\mathcal{M}$, then $V(x_{0})=0 \text{ and }1-m(x)=o(|x-x_0|^{2s})\text{ as } x\rightarrow x_0.$
 Let $u_{t}(x)$ be the trial function defined by  \eqref{eq2.11} with $\bar x= x_0$. Similar to \eqref{eq2.12}, we have
\begin{eqnarray}
  \nonumber &&\int_{\mathbb{R}^{N}}|(-\Delta)^{\frac{s}{2}}u_{t}(x)|^{2}dx
  -a_s^{*}\frac{N}{N+2s}\int_{\mathbb{R}^{N}}m(x)|u_{t}(x)|^{\frac{4s}{N}+2}dx  \\
  \nonumber &&=\frac{a_s^{*}t^{2s}}{\|U\|_{2}^{\frac{4s}{N}+2}}\frac{N}{N+2s}\int_{\mathbb{R}^{N}}
  \big(1-m(\frac{x}{t}+x_0)\big)|U|^{\frac{4s}{N}+2}dx  \\
   &&=(a_s^{*})^{-\frac{N}{2s}}\frac{N}{N+2s}\int_{\mathbb{R}^{N}}
   \frac{1-m(\frac{x}{t}+x_0)}{|\frac{x}{t}|^{2s}}|x|^{2s}|U|^{\frac{4s}{N}+2}dx \label{eq2.14}\\
  \nonumber && \rightarrow 0, \quad \text{as}\ t\rightarrow +\infty.
\end{eqnarray}
This estimate and \eqref{eq2.13} means that $I_{1}(a_s^{*})\leq V(x_{0})=0$. On the other hand, $I_{1}(a_s^{*})\geq 0$ by \eqref{eq2.8}, so  $I_{1}(a_s^{*})=0$.

Letting $a\rightarrow a_s^{*}$ and then $t\rightarrow +\infty$  in  \eqref{eq2.12} and \eqref{eq2.13}, we see that  $V(x_0)\leq\limsup\limits_{a\nearrow a_s^{*}}I_{1}(a)\leq V(x_{0})$, which gives $\lim\limits_{a\nearrow a_s^{*}}I_{1}(a)=I_{1}(a_s^{*})$.

Hence, it remains to show that $I_{1}(a_s^{*})$ has no minimizers. Otherwise, if $u(x)\in S_1$ is a  minimizer for $I_{1}(a_s^{*})=0$, then
\begin{equation}\label{eq2.15}
  \int_{\mathbb{R}^{N}}V(x)|u|^{2}dx=\inf_{x\in\mathbb{R}^{N}}V(x)=0,
\end{equation}
and by \eqref{eq1.8},
\begin{eqnarray*}
   &&\int_{\mathbb{R}^{N}}|(-\Delta)^{\frac{s}{2}}u|^{2}dx
     =\frac{a_s^{*}N}{N+2s}\int_{\mathbb{R}^{N}}m(x)|u|^{\frac{4s}{N}+2}dx  \\
   &&\leq\frac{a_s^{*}N}{N+2s}\int_{\mathbb{R}^{N}}|u|^{\frac{4s}{N}+2}dx\leq
     \int_{\mathbb{R}^{N}}|(-\Delta)^{\frac{s}{2}}u|^{2}dx.
\end{eqnarray*}
Therefore, using $I_{1}(a_s^{*})=0$, we have
\begin{equation*}
  \int_{\mathbb{R}^{N}}|(-\Delta)^{\frac{s}{2}}u|^{2}dx
  =\frac{a_s^{*}N}{N+2s}\int_{\mathbb{R}^{N}}|u|^{\frac{4s}{N}+2}dx.
\end{equation*}
Combining the property of \eqref{eq1.8}, this implies that $u$ has to be equal to $U$, up to a translation and dilation, which contradicts \eqref{eq2.15}. Thus, $I_1(a^*)$ cannot be attained.
{\hfill $\Box$}

\section{Asymptotic behavior under general bounded trapping potential}
\noindent

In this section, we aim to prove Theorem \ref{Th1.2}, which is to give a description on  the asymptotic behavior of the minimizers for $I_{1}(a)$ as $a\nearrow a_s^{*}$ under  general bounded trapping potentials.

Let $u_{a}$ be a minimizer of $I_{1}(a)$, then $u_{a}$ satisfies the Euler-Lagrange equation
\begin{equation}\label{3.1}
  (-\Delta)^{s}u_{a}(x)+V(x)u_{a}(x)=\frac{\lambda_{a}}{2}u_{a}+am(x)|u_{a}(x)|^\frac{4s}{N}u_{a}(x),
\end{equation}
where $\lambda_{a}\in \mathbb{R}$ is a suitable Lagrange multiplier. We give first the following lemma.

\begin{lemma}\label{le3.1}
Under the assumptions of Theorem \ref{Th1.2}, let $u_{a}\in S_{1}$ be a minimizer of $I_{1}(a)$ with $a\in (0,a_s^{*})$. Define
\begin{equation}\label{3.2}
  \epsilon^{-2s}_{a}:=\int_{\mathbb{R}^{N}}|(-\Delta)^{\frac{s}{2}}u_{a}|^{2}dx.
\end{equation}
Then
\begin{enumerate}
  \item [\bf{(i)}]
\begin{equation}\label{3.3}
  \epsilon_{a}\rightarrow 0,\ \text{as}\ a\nearrow a_s^{*}.
\end{equation}

  \item [\bf{(ii)}] There exists a sequence $\{\bar{z}_{a}\}\subset \mathbb{R}^{N}$ such that
\begin{equation}\label{3.4}
  \bar{w}_{a}(x):=\epsilon^{\frac{N}{2}}_{a}u_{a}(\epsilon_{a}x+\bar{z}_{a})\in S_{1}
\end{equation}
and
\begin{equation}\label{3.5}
  \int_{\mathbb{R}^{N}}|(-\Delta)^{\frac{s}{2}}\bar{w}_{a}|^{2}dx=1,\quad
\frac{a_s^{*}N}{N+2s}\int_{\mathbb{R}^{N}}m(\epsilon_{a}x+\bar{z}_{a})|\bar{w}_{a}|^{\frac{4s}{N}+2}dx
  \overset{a\nearrow a_s^{*}}\longrightarrow 1.
\end{equation}
Furthermore, there exist $R,\eta>0$ such that
\begin{equation}\label{3.6}
  \lim_{a\nearrow a_s^{*}}\int_{B_{R}(0)}|\bar{w}_{a}|^{2}dx\geq \eta>0.
\end{equation}
  \item [\bf{(iii)}] Up to a subsequence, there holds
\begin{equation}\label{3.7}
  \lim_{a\nearrow a_s^{*}}\bar{z}_{a}=x_{0}\in \mathcal{Z}.
\end{equation}
\end{enumerate}

\end{lemma}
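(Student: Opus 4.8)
The plan is to prove the three assertions in order, exploiting that $u_a$ is a genuine minimizer, so that $I_1(a)=J_a(u_a)\to I_1(a_s^*)=0$ as $a\nearrow a_s^*$ by Theorem \ref{Th1.1}. The starting point is the refined lower bound obtained by adding and subtracting $\frac{aN}{N+2s}\int|u_a|^{\frac{4s}{N}+2}\,dx$ and invoking \eqref{eq1.8}:
\[ I_1(a)\ \ge\ \Big(1-\frac{a}{a_s^*}\Big)\epsilon_a^{-2s}+\int_{\mathbb{R}^N}V(x)|u_a|^2\,dx+\frac{aN}{N+2s}\int_{\mathbb{R}^N}\big(1-m(x)\big)|u_a|^{\frac{4s}{N}+2}\,dx, \]
in which all three summands are nonnegative ($V\ge0$ and $m\le1$). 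Since the left side tends to $0$, each summand does; in particular $\int V|u_a|^2\to0$ and $\big(1-\tfrac{a}{a_s^*}\big)\epsilon_a^{-2s}\to0$. For (i) I argue by contradiction: if $\epsilon_a\not\to0$ along some $a_k\nearrow a_s^*$, then $\epsilon_{a_k}^{-2s}$ stays bounded, so $\{u_{a_k}\}$ is bounded in $H^s(\mathbb{R}^N)$ with weak limit $u_0$. The decisive use of $(V_2)$ is tightness: choosing $R$ with $V\ge V^\infty-\epsilon>0$ on $B_R(0)^c$ gives $\int_{B_R(0)^c}|u_{a_k}|^2\le (V^\infty-\epsilon)^{-1}\int V|u_{a_k}|^2\to0$, so mass cannot escape. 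With the local compact embedding this yields $u_{a_k}\to u_0$ strongly in $L^2$, hence in every $L^r$, $2\le r<2_s^*$, and $u_0\in S_1$. Passing to the limit via weak lower semicontinuity of the Gagliardo seminorm and strong convergence of the remaining terms gives $J_{a_s^*}(u_0)\le\lim J_{a_k}(u_{a_k})=0=I_1(a_s^*)$, so $u_0$ would be a minimizer of $I_1(a_s^*)$, contradicting Theorem \ref{Th1.1}(iii). Hence $\epsilon_a\to0$.

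For (ii) I first record that the $L^2$-preserving rescaling $w_a(x):=\epsilon_a^{N/2}u_a(\epsilon_a x)$ is exactly normalized: a change of variables gives $\|w_a\|_2=1$ and $\int|(-\Delta)^{\frac{s}{2}}w_a|^2=\epsilon_a^{2s}\epsilon_a^{-2s}=1$, and both quantities are invariant under the translation producing $\bar w_a$, so the first identity in \eqref{3.5} holds for any choice of $\bar z_a$. The same scaling shows $\frac{a_s^*N}{N+2s}\int m(\epsilon_a x+\bar z_a)|\bar w_a|^{\frac{4s}{N}+2}\,dx=\frac{a_s^*N}{N+2s}\epsilon_a^{2s}\int m|u_a|^{\frac{4s}{N}+2}\,dx$; dividing the identity $\epsilon_a^{-2s}-\frac{aN}{N+2s}\int m|u_a|^{\frac{4s}{N}+2}=I_1(a)-\int V|u_a|^2\to0$ by $\epsilon_a^{-2s}\to\infty$ and using $a\to a_s^*$ yields the second limit in \eqref{3.5}. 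To locate the mass I rule out vanishing for $\{w_a\}$: if $\sup_{y}\int_{B_R(y)}|w_a|^2\to0$ for every $R$, the vanishing lemma \cite[Lemma I.1]{PL2} forces $\|w_a\|_{\frac{4s}{N}+2}\to0$, whence $\int m|\bar w_a|^{\frac{4s}{N}+2}\to0$ (as $m\le1$ and $\bar w_a$ is a translate of $w_a$), contradicting the second limit in \eqref{3.5}. Thus there are $R,\eta>0$ and points $y_a$ with $\int_{B_R(y_a)}|w_a|^2\ge\eta$; setting $\bar z_a:=\epsilon_a y_a$ makes $\bar w_a(x)=w_a(x+y_a)$, which delivers \eqref{3.6}.

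Finally, for (iii) the boundedness of $\{\bar z_a\}$ again comes from $(V_2)$. Since $\int V(\epsilon_a x+\bar z_a)|\bar w_a|^2\,dx=\int V|u_a|^2\to0$ while $\int_{B_R(0)}|\bar w_a|^2\ge\eta$, if $|\bar z_a|\to\infty$ then $V(\epsilon_a x+\bar z_a)\to V^\infty>0$ uniformly on $B_R(0)$ (because $\epsilon_a\to0$), forcing the left side to stay $\ge(V^\infty-\epsilon)\eta>0$, a contradiction. Hence $\bar z_a\to\bar z_0$ along a subsequence. Writing $\bar w_a\rightharpoonup w_0$ in $H^s(\mathbb{R}^N)$ with $w_0\ne0$ (from \eqref{3.6} and local compactness) and applying Fatou's lemma to $V(\epsilon_a x+\bar z_a)|\bar w_a|^2$ gives $\big(\liminf_{x\to\bar z_0}V(x)\big)\|w_0\|_2^2\le0$, so $V$ attains its infimum $0$ continuously at $\bar z_0$, i.e. $\bar z_0\in\mathcal Z$, which is \eqref{3.7}.

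The main obstacles are precisely the two places where compactness could fail, and both are rescued by the strict positivity $V^\infty>0$ built into $(V_2)$: it prevents mass from leaking to infinity in (i), and it confines $\{\bar z_a\}$ to a bounded set in (iii). I expect the most delicate point to be the final identification in (iii) — namely upgrading the Fatou inequality from ``$\liminf_{x\to\bar z_0}V(x)=0$'' to genuine membership of $\bar z_0$ in the continuity set $\mathcal Z$ (so that $\bar z_0$ is an honest continuity minimum of $V$, not merely an essential-infimum point), which is where the precise definition of $\mathcal Z$ must be used carefully.
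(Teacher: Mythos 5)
Your proof is correct and, for parts (ii) and (iii), essentially coincides with the paper's: the same rescaling identities, the same use of the vanishing lemma of \cite{PL2} played against the normalization \eqref{3.5}, and the same Fatou-type exclusion of $|\bar z_a|\to\infty$ via $(V_2)$. The genuine difference is in part (i). The paper, after observing that $\{u_{a_k}\}$ is a bounded minimizing sequence for $I_1(a_s^*)$, re-runs the concentration-compactness trichotomy (Lemma \ref{Le2.6}) as in the proof of Theorem \ref{Th1.1}(i) to extract a strongly $L^2$-convergent subsequence. You instead obtain compactness directly from the potential: since $I_1(a)\to 0$ and each term of your three-term lower bound is nonnegative, $\int_{\mathbb{R}^N}V|u_a|^2\,dx\to 0$, and since $(V_2)$ gives $V\ge V^\infty-\epsilon>0$ outside a large ball, the tail masses $\int_{B_R(0)^c}|u_{a_k}|^2\,dx$ are forced to vanish; together with the locally compact embedding this yields strong $L^2$ convergence with no vanishing/dichotomy analysis at all. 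This is a cleaner and more elementary route, bought by exploiting the specific structure $\sup V=V^\infty>0$ at infinity; the paper's route is the generic machinery, independent of how the potential controls the tails. Both arguments then reach the same contradiction with Theorem \ref{Th1.1}(iii).

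On the step you flag as delicate in (iii): your Fatou argument indeed only yields $\liminf_{x\to\bar z_0}V(x)=0$ (or $V(\bar z_0)=0$), whereas membership in $\mathcal Z$ requires the full limit $\lim_{x\to\bar z_0}V(x)=V(\bar z_0)=0$. Be aware that the paper is no tighter here: its sentence ``if $x_0\notin\mathcal Z$ \dots we still get a contradiction with \eqref{3.15}'' implicitly assumes that $x_0\notin\mathcal Z$ forces $V$ to be bounded below by a positive constant near $x_0$, which is exactly what the Fatou argument needs; this holds when $V$ is continuous near its minimum set but not for a general $L^\infty$ potential. So your proposal matches the paper's level of rigor at this point, and the remaining loophole is the paper's, not yours.
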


\begin{proof}
\textbf{(i)} If (\ref{3.3}) is false, then there exists a sequence $\{a_{k}\}$ with $a_{k}\nearrow a_s^{*}$ as $k\rightarrow \infty$ such that $\{u_{a_{k}}\}$ is bounded in $H^{s}(\mathbb{R}^{N})$ and
\begin{eqnarray*}
  J_{a_s^{*}}(u_{a_{k}}) &=& J_{a_{k}}(u_{a_{k}})+(a_{k}-a_s^{*})\frac{N}{N+2s}\int_{\mathbb{R}^{N}}m(x)|u_{a_{k}}|^{\frac{4s}{N}+2}dx \\
   &=& I_{1}(a_{k})+O(a_s^{*}-a_{k})\rightarrow I_{1}(a_s^{*}).
\end{eqnarray*}
This means that $\{u_{a_{k}}\}$ is a minimizing sequence of $I_{1}(a_s^{*})$. Similar to the proof of Theorem \ref{Th1.1} (i), by applying
Lemma \ref{Le2.6}, we know that there exists a subsequence of $\{a_{k}\}$ (still denoted by $\{a_{k}\}$) and $u_{0}\in H^{s}(\mathbb{R}^{N})$ such that
\begin{equation*}
  u_{a_{k}}\overset{k}\rightharpoonup u_{0} \ \text{weakly in}\ H^{s}(\mathbb{R}^{N}) \ \text{and}\ u_{a_{k}}\overset{k}\rightarrow u_{0} \ \text{strongly in}\ L^{2}(\mathbb{R}^{N}).
\end{equation*}
Thus,
\begin{equation*}
  0=I_{1}(a_s^{*})\leq J_{a_s^{*}}(u_{0})\leq \lim_{k\rightarrow \infty}J_{a_{k}}(u_{a_{k}})=\lim_{k\rightarrow \infty}I_{1}(a_{k})=0.
\end{equation*}
This means that $u_{0}$ is a minimizer of $I_{1}(a_s^{*})$, which contradicts Theorem \ref{Th1.1} (iii). So, part (i) is proved.
\vskip.1truein
\textbf{(ii).} By Theorem \ref{Th1.1}, $I_{1}(a)\rightarrow I_{1}(a_s^{*})=0$, as $a\nearrow a_s^{*}$. Then,
\begin{eqnarray}
 \nonumber  0&\leq& \int_{\mathbb{R}^{N}}|(-\Delta)^{\frac{s}{2}}u_{a}|^{2}dx
   -\frac{Na}{N+2s}\int_{\mathbb{R}^{N}}m(x)|u_{a}|^{\frac{4s}{N}+2}dx\\
 \nonumber &=&\epsilon^{-2s}_{a}-\frac{Na}{N+2s}\int_{\mathbb{R}^{N}}m(x)|u_{a}|^{\frac{4s}{N}+2}dx \\
 \label{eq3.7'}  &\leq& I_{1}(a)\rightarrow 0, \ \text{as}\ a\nearrow a_s^{*}.
\end{eqnarray}
Set
\begin{equation}\label{eq3.7}
w_a(x):=\epsilon^{\frac{N}{2}}_{a}u_{a}(\epsilon_{a}x).
\end{equation}
Then, by (\ref{3.3}) and (\ref{eq3.7'})  we see that
\begin{equation}\label{3.8}
  \frac{a N}{N+2s}\int_{\mathbb{R}^{N}}m(\epsilon_a x)|w_{a}|^{\frac{4s}{N}+2}dx\rightarrow 1, \ \text{as}\ a\nearrow a_s^{*}.
\end{equation}
Now, we claim that there exist $\{z_a\}\subset \mathbb{R}^N$, $R>0$ such that
\begin{equation}\label{eq3.9}
  \lim_{a\nearrow a_s^{*}}\int_{B_{R}(z_a)}|{w}_{a}|^{2}dx\geq \eta>0.
\end{equation}
 Indeed,  if (\ref{eq3.9}) is false,  then by the  vanishing lemma  \cite[Lemma I.1]{PL2},  we know that $\|{w}_{a}(x)\|_{r}\rightarrow 0$ as $a\nearrow a_s^{*}$ for any $2<r < 2_s^{*}$, which contradicts \eqref{3.8}.
 Set
 $\bar w_a(x):=w_a(x+z_a),$
 it then follows from \eqref{eq3.7}-\eqref{eq3.9} by taking $\bar{z}_{a}=\epsilon_{a}z_{a}$ that (\ref{3.5}) and \eqref{3.6} hold.
%
\vskip.1truein

\textbf{(iii).} By \eqref{eq1.8} and {$I_{1}(a)=J_{a}(u_{a})$}, we know that
\begin{equation}\label{3.14}
  \int_{\mathbb{R}^{N}}V(x)|u_{a}(x)|^{2}\leq I_{1}(a)\rightarrow 0, \quad \text{as}\quad a\nearrow a_s^{*}.
\end{equation}
It then follows from (\ref{3.4})  that
\begin{equation}\label{3.15}
  \int_{\mathbb{R}^{N}}V(x)|u_{a}(x)|^{2}dx=\int_{\mathbb{R}^{N}}V(\epsilon_{a}x
  +\bar{z}_{a})|\bar{w}_{a}(x)|^{2}dx
    \rightarrow 0, \ \text{as} \ a\nearrow a_s^{*}.
\end{equation}
We first \textbf{claim} that $\{ \bar{z}_{a}\} $ is bounded in $\mathbb{R}^{N}$.
By contradiction,  if, passing to  a subsequence, $\bar{z}_{a} \stackrel{a}\rightarrow \infty$,  it follows from $\lim\limits_{|x|\rightarrow \infty} V(x)=\sup_{x\in\mathbb{R}^{N}}V(x)>0$  that there exists $C_{\delta}>0$ such that
\begin{equation*}
  \lim_{n\rightarrow\infty}V(\bar{z}_{a_{n}})\geq 2C_{\delta}>0,
\end{equation*}
this together with  Fatou's Lemma and (\ref{3.6}) imply that
\begin{equation*}
  \lim_{n\rightarrow \infty}\int_{\mathbb{R}^{N}}V(\epsilon_{a_{n}}x+\bar{z}_{a_{n}})|\bar{w}_{a_{n}}|^{2}dx
  \geq\int_{\mathbb{R}^{N}}\lim_{n\rightarrow\infty}V(\epsilon_{a_{n}}x+\bar{z}_{a_{n}})|\bar{w}_{a_{n}}|^{2}dx
  \geq C_{\delta}\eta,
\end{equation*}
which contradicts \eqref{3.15}.

Then,  we may assume that, for some $x_{0}\in \mathbb{R}^N$, $\bar{z}_{a_{n}} \stackrel{n} \rightarrow x_{0}$. If $x_{0}\not\in\mathcal{Z}$, as the above discussion for the boundedness of $\bar{z}_{a_{n}}$, we still get a contradiction with \eqref{3.15}. Therefore, (\ref{3.7}) is true and the proof is completed.
\end{proof}\\

{\noindent\textbf{Proof of Theorem \ref{Th1.2}.} }
For any sequence $\{a_{k}\}\subset (0,a_s^{*})$ with $a_k\nearrow a_s^{*}$ as $k\rightarrow\infty$,  let $u_{k}(x):=u_{a_{k}}(x)$ be the corresponding minimizers of $I_1(a_k)$,
we denote  $\epsilon_{a}, \ \bar{w}_{a}(x)$  and $\bar{z}_{a}$ defined in Lemma \ref{le3.1}, respectively,  by 
\begin{equation*}
  \epsilon_{k}:=\epsilon_{a_{k}}, \ \bar{w}_{k}(x):=\bar{w}_{a_{k}}(x)\geq 0,\text{ and }\bar{z}_{k}:=\bar{z}_{a_{k}}.
\end{equation*}
It follows from (\ref{3.5}) and (\ref{3.7}) that there exists a subsequence, still denote by $\{a_{k}\}$, such that
\begin{equation}\label{3.16}
  \lim_{k\rightarrow \infty}\bar{z}_{k}=\bar{x}_{0} \ \text{and}\ V(\bar{x}_{0})=0, \text{ i.e., } { \bar{x}_{0}\in \mathcal{Z}},
\end{equation}
and
\begin{equation*}
  \bar{w}_{k}(x)\rightharpoonup \bar{w}_{a_s^{*}}(x)\geq 0 \ \text{weakly in}\ H^{s}(\mathbb{R}^{N})
\end{equation*}
for some $\bar{w}_{a_s^{*}}(x)\in H^{s}(\mathbb{R}^{N})$. Moreover, \eqref{3.1}  implies that $\bar{w}_{k}(x)$ satisfies
\begin{eqnarray}
  \nonumber 2(-\Delta)^{s}\bar{w}_{k}(x)+2\epsilon^{2s}_{k}
  V(\epsilon_{k}x+\bar{z}_{a})\bar{w}_{k}(x)&=& 2am(\epsilon_{k}x
  +\bar{z}_{k})|\bar{w}_{k}(x)|^{\frac{4s}{N}}\bar{w}_{k}(x)\\
   && +\epsilon^{2s}\lambda_{k}\bar{w}_{k}(x). \label{3.17}
\end{eqnarray}
Using \eqref{3.1}, we know that
\begin{equation}\label{3.18}
  \lambda_{a}=2I_{1}(a)-a\frac{4s}{N+2s}\int_{\mathbb{R}^{N}}m(x)|u_{a}(x)|^{\frac{4s}{N}+2}dx.
\end{equation}
This and \eqref{3.8} implies
\begin{equation}\label{3.19}
  \epsilon_{a}^{2s}\lambda_{a}\rightarrow -\frac{4s}{N} \ \text{as}\ a\nearrow a_s^{*}.
\end{equation}
 Then, taking the weak limit in (\ref{3.17}), we know that $\bar{w}_{a_s^{*}}$ satisfies, in the weak sense,
\begin{equation}\label{3.20}
  (-\Delta)^{s}\bar{w}_{a_s^{*}}(x)
  =-\frac{2s}{N}\bar{w}_{a_s^{*}}(x)
  +a_s^{*}m(\bar{x}_{0})|\bar{w}_{a_s^{*}}(x)|^{\frac{4s}{N}}\bar{w}_{a_s^{*}}(x).
\end{equation}
Obviously, $\bar{w}_{a_s^{*}}\not\equiv0$ by \eqref{3.6}.
Moreover, it follows from \eqref{3.20} that $\bar{w}_{a_s^{*}}$ satisfies the following Pohozaev type identity (see e.g.,  \cite[Lemma 8.2]{RF2}),
\begin{equation}\label{3.21}
  \int_{\mathbb{R}^{N}}|\bar{w}_{a_s^{*}}|^{\frac{4s}{N}+2}dx= \frac{N+2s}{Na_s^{*}m(\bar{x}_{0})}
  \int_{\mathbb{R}^{N}}|(-\Delta)^{\frac{s}{2}}\bar{w}_{a_s^{*}}|^{2}dx.
\end{equation}
However,  the   Gagliardo-Nirenberg inequality \eqref{eq1.8} means that
\begin{equation}\label{3.22}
  \int_{\mathbb{R}^{N}}|\bar{w}_{a_s^{*}}|^{\frac{4s}{N}+2}dx\leq \frac{N+2s}{Na_s^{*}}
  \int_{\mathbb{R}^{N}}|(-\Delta)^{\frac{s}{2}}\bar{w}_{a_s^{*}}|^{2}dx
  \big(\int_{\mathbb{R}^{N}}|\bar{w}_{a_s^{*}}|^{2}dx\big)^{\frac{2s}{N}}.
\end{equation}
Thus, \eqref{3.21} and \eqref{3.22} imply that
\begin{equation*}
  1\leq \frac{1}{m(\bar{x}_{0})}\leq \big(\int_{\mathbb{R}^{N}}|\bar{w}_{a_s^{*}}|^{2}dx\big)^{\frac{2s}{N}} \leq 1.
\end{equation*}
Therefore,
\begin{equation}\label{3.23}
  m(\bar{x}_{0})=1 \quad \text{and} \quad \|\bar{w}_{a_s^{*}}\|_{2}=1.
\end{equation}
So,  $\bar{x}_{0}\in \mathcal{Z}\cap\mathcal{M}$.
Moreover, it follows from \eqref{3.23} and  \eqref{3.21} that $\bar{w}_{a_s^{*}}$ is an extremal of the  Gagliardo-Nirenberg inequality \eqref{eq1.8}, which together   with \eqref{3.20}  give that

\begin{equation}\label{3.24}
  \bar{w}_{a_s^{*}}(x)
  =\frac{(\frac{2s}{N})^{\frac{N}{4s}}
  U\big((\frac{2s}{N})^{\frac{1}{2s}}(x+z_{0})\big)}
  {\|U\|_{2}},
\end{equation}
where $z_{0}$ is a certain point in $\mathbb{R}^{N}$.  Since $\|\bar{w}_{a_s^{*}}\|^{2}_{2}=1$, by the norm preservation we further conclude that
\begin{equation*}
  \bar{w}_{k}\overset k\rightarrow \bar{w}_{a_s^{*}} \ \text{strongly in} \ L^{2}(\mathbb{R}^{N}).
\end{equation*}
Together with the boundness of $\bar{w}_{k}$ in $H^{s}(\mathbb{R}^{N})$, we have
\begin{equation}\label{eq3.25}
  \bar{w}_{k} \overset k\rightarrow \bar{w}_{a_s^{*}} \ \text{strongly in} \ L^{r}(\mathbb{R}^{N})\ \text{for}\
  2\leq r<2_s^{*}.
\end{equation}
 Furthermore, since  $\bar w_{k}$ and
$\bar{w}_{a_s^{*}}$ satisfy (\ref{3.17}) and (\ref{3.20}) respectively,
we know  that  $\bar w_{k}$ essentially converges to
$\bar{w}_{a_s^{*}}$ in $H^{s}(\mathbb{R}^{N})$, and we finish the proof of Theorem \ref{Th1.2}.
{\hfill $\Box$}\\

\section{Sharp energy estimates and precisely asymptotic behavior}
\noindent

In this section, by assuming that $V(x)$ is a polynomial type trapping potential, we first establish a sharp energy estimate for $I_{1}(a)$, upon which we then get a precisely asymptotic behavior for the minimizers of $I_{1}(a)$, and  Theorem \ref{Th1.3} is proved. Our first lemma is concerned with  the upper bound of $I_{1}(a)$, and the same lower bound is given in the proof of Theorem \ref{Th1.3}.
\begin{lemma}\label{Le4.1}
Under the assumptions of Theorem \ref{Th1.3} with  $l>0$ and $\gamma>0$ being defined there , then, for $k$ large enough,
\begin{equation}\label{eq4.1}
  0\leq I_{1}(a_{k})\leq \big(1+o(1)\big) \frac{l+2s}{l}\big(\frac{l\gamma}{2s}\big)^{\frac{2s}{l+2s}}
  {a_s^{*}}^{-\frac{N+l}{l+2s}}\big(\frac{N+2s}{2s}(a_s^{*}-a_{k})\big)^{\frac{l}{l+2s}},
\end{equation}
where and in what follows,  $o(1)$ denotes a quantity which goes to 0 as $k \rightarrow +\infty$.
\end{lemma}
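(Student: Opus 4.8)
The lower bound is immediate: since $a_k<a_s^{*}$ and $V\ge 0$ by $(V_1)$, inequality \eqref{eq2.8} gives $J_{a_k}(u)\ge\big(1-\frac{a_k}{a_s^{*}}\big)\|(-\Delta)^{\frac{s}{2}}u\|_2^2+\int_{\mathbb{R}^N}V|u|^2\,dx\ge 0$ for every $u\in S_1$, hence $I_1(a_k)\ge 0$. For the upper bound the plan is to test $J_{a_k}$ on a rescaled ground state. With $x_0$ the point provided by $(V_3)$ and a scale $\tau>0$ to be chosen, set
\[
u_\tau(x)=\frac{\tau^{N/2}}{\|U\|_2}\,U\big(\tau(x-x_0)\big)\in S_1 .
\]
Using the Pohozaev identity \eqref{eq1.7}, the identities \eqref{eq1.9}--\eqref{eq1.10} and $a_s^{*}=\|U\|_2^{4s/N}$, the kinetic energy equals $\frac{N}{2s}\tau^{2s}$, and its combination with the principal part of the nonlinear term produces the factor $\frac{N}{2s}\frac{a_s^{*}-a_k}{a_s^{*}}\,\tau^{2s}$, which is the term that degenerates as $a_k\nearrow a_s^{*}$.

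The two remaining contributions measure how $V$ departs from $0$ and $m$ from $1$ near $x_0$. After the substitution $y=\tau(x-x_0)$ one is led to study, as $\tau\to\infty$,
\[
\tau^{p}\!\int_{\mathbb{R}^N}\! V|u_\tau|^2\,dx=\frac{1}{\|U\|_2^2}\!\int_{\mathbb{R}^N}\!\frac{V(y/\tau+x_0)}{|y/\tau|^{p}}\,|y|^{p}|U|^2\,dy,
\]
together with the analogous mass defect $\tau^{q}\int_{\mathbb{R}^N}\big(1-m(y/\tau+x_0)\big)|U|^{\frac{4s}{N}+2}\,dy$. By $(V_4)$ and $(M_2)$ the integrands converge pointwise to $C_0|y|^{p}|U|^2$ and $\bar C|y|^{q}|U|^{\frac{4s}{N}+2}$; since $V,m$ are bounded and the decay \eqref{eq1.6} together with the ranges $p<N+4s$ and $q<N+8s+\frac{8s^2}{N}$ ensure $\int_{\mathbb{R}^N}|y|^{p}|U|^2\,dy<\infty$ and $\int_{\mathbb{R}^N}|y|^{q}|U|^{\frac{4s}{N}+2}\,dy<\infty$ (the integrability flagged in Remark \ref{Re1.1}), dominated convergence shows the potential term to be $(1+o(1))\frac{C_0}{\|U\|_2^2}\tau^{-p}\int|y|^{p}|U|^2\,dy$ and the nonlinear defect to contribute $(1+o(1))\frac{a_s^{*}N\bar C}{(N+2s)\|U\|_2^{4s/N+2}}\tau^{-(q-2s)}\int|y|^{q}|U|^{\frac{4s}{N}+2}\,dy$.

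Collecting the three pieces gives $J_{a_k}(u_\tau)=(1+o(1))\big(A\tau^{2s}+B_1\tau^{-p}+B_2\tau^{-(q-2s)}\big)$ with $A=\frac{N}{2s}\frac{a_s^{*}-a_k}{a_s^{*}}$ and explicit positive constants $B_1,B_2$. Since the optimal scale satisfies $\tau_k\to\infty$ as $a_k\nearrow a_s^{*}$, only the slower-decaying correction $\tau^{-l}$ with $l=\min(p,q-2s)$ survives at leading order (both survive when $p=q-2s$), so that $J_{a_k}(u_\tau)=(1+o(1))\big(A\tau^{2s}+B\tau^{-l}\big)$, where $B$ equals $B_1$, $B_2$ or $B_1+B_2$ according to the three regimes in the definition of $\gamma$, and the surviving integral(s) reduce to $\Gamma_1$ and/or $\Gamma_2$ of \eqref{eq1.13}. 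Minimizing $\tau\mapsto A\tau^{2s}+B\tau^{-l}$ at $\tau_k^{\,l+2s}=\frac{lB}{2sA}$ yields
\[
\min_{\tau>0}\big(A\tau^{2s}+B\tau^{-l}\big)=\frac{l+2s}{l}\Big(\frac{N}{2s}\Big)^{\frac{l}{l+2s}}\Big(\frac{lB}{2s}\Big)^{\frac{2s}{l+2s}}\Big(\frac{a_s^{*}-a_k}{a_s^{*}}\Big)^{\frac{l}{l+2s}},
\]
and substituting $A,B$ and simplifying by means of $\|U\|_2^2=(a_s^{*})^{N/(2s)}$ reproduces precisely the right-hand side of \eqref{eq4.1}. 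Taking $\tau=\tau_k$ then gives the claimed bound \eqref{eq4.1}.

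The main obstacle is the uniform control of the two correction integrals as $\tau\to\infty$: one must justify the passage to the limit (which is exactly why the ranges of $p$ and $q$, dictated by the decay rate \eqref{eq1.6} of $U$, are imposed) and verify that the faster-decaying correction $\tau^{-\max(p,\,q-2s)}$, evaluated at the optimal scale $\tau_k$, is genuinely negligible against $\tau_k^{-l}$. The remaining effort is the algebraic bookkeeping that identifies the constant $\gamma$ in each of the three regimes $q-2s<p$, $p<q-2s$ and $q-2s=p$.
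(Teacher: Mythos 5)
Your proposal is correct and follows essentially the same route as the paper: the same rescaled ground state $u_\tau(x)=\frac{\tau^{N/2}}{\|U\|_2}U(\tau(x-x_0))$ centered at the point $x_0$ of $(V_3)$, the same Pohozaev-identity reduction producing the exact term $\frac{N}{2s}\frac{a_s^{*}-a_k}{a_s^{*}}\tau^{2s}$, the same treatment of the $V$ and $1-m$ corrections via $(V_4)$, $(M_2)$ and the integrability of Remark \ref{Re1.1}, and the same case analysis over $l=\min(p,q-2s)$ with optimization in the scale parameter. The only difference is presentational: you minimize $A\tau^{2s}+B\tau^{-l}$ in closed form once and then specialize $B$ to the three regimes, whereas the paper substitutes the optimal value of $t$ separately in each case; the resulting constants agree.
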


\begin{proof}
Taking
\begin{equation}\label{eq4.2}
  u_{t}(x)=\frac{t^{\frac{N}{2}}}{\|U(x)\|_{2}}U\big(t(x-x_{0})\big)\in S_1,
\end{equation}
then, by the assumption $(V_4)$, we know that, for $t>0$ large enough,
\begin{eqnarray}\label{eq4.3}
  \nonumber\int_{\mathbb{R}^{N}}V(x)|u_{t}|^{2}dx&=&
  \frac{1}{\|U(x)\|_{2}^2}\int_{\mathbb{R}^{N}}V(x/t+x_0)|U|^{2}dx \\
  &=& \frac{C_{0}+o(1)}{(a_s^{*})^{\frac{N}{2s}}}t^{-p}\int_{\mathbb{R}^{N}}|x|^{p}|U(x)|^{2}dx,
\end{eqnarray}
where $\int_{\mathbb{R}^{N}}|x|^{p}|U(x)|^{2}dx<+\infty$ by Remark \ref{Re1.1}, and $o(1)$ denotes a quantity which goes to 0 as $t \rightarrow +\infty$.

Since \eqref{eq1.7}, \eqref{eq1.9} and \eqref{eq1.10}, we see that
\begin{eqnarray}\label{4.4}
   \nonumber&& \int_{\mathbb{R}^{N}}|(-\Delta)^{\frac{s}{2}}u_{t}(x)|^{2}dx-
   \frac{a N}{N+2s}\int_{\mathbb{R}^{N}}m(x)|u_{t}(x)|^{\frac{4s}{N}+2}dx  \\
   \nonumber&=& \big(\int_{\mathbb{R}^{N}}|(-\Delta)^{\frac{s}{2}}u_{t}(x)|^{2}dx-
\frac{a_s^{*}N}{N+2s}\int_{\mathbb{R}^{N}}|u_{t}(x)|^{\frac{4s}{N}+2}dx\big) \\
   \nonumber&&+(a_s^{*}-a)\frac{N}{N+2s}\int_{\mathbb{R}^{N}}|u_{t}(x)|^{\frac{4s}{N}+2}dx\\
   &&\nonumber+ \frac{a N}{N+2s}\int_{\mathbb{R}^{N}}\big[1-m(x)\big]|u_{t}(x)|^{\frac{4s}{N}+2}dx\\
   &=&\frac{N}{2s}\frac{a_s^{*}-a}{a_s^{*}}t^{2s}+\frac{a N}{N+2s}\int_{\mathbb{R}^{N}}\big[1-m(x)\big]|u_{t}(x)|^{\frac{4s}{N}+2}dx.
\end{eqnarray}
On the other hand, by the assumption of $(M_{2})$ we have
\begin{eqnarray}\label{4.6}
\nonumber && \frac{aN}{N+2s}\int_{\mathbb{R}^{N}}\big[1-m(x)\big]|u_{t}(x)|^{\frac{4s}{N}+2}dx \\
\nonumber  &&= a\frac{N}{N+2s}\|U\|_{2}^{-\frac{4s}{N}-2}t^{2s}
    \int_{\mathbb{R}^{N}}\big[1-m(\frac{x}{t}+x_{0})\big]|U(x)|^{\frac{4s}{N}+2}dx \\
  &&\leq\frac{1}{t^{q-2s}}\frac{N}{N+2s}\frac{\bar{C}+o(1)}{{a_s^{*}}^{\frac{N}{2s}}} \int_{\mathbb{R}^{N}}|x|^{q}|U|^{\frac{4s}{N}+2}dx,
\end{eqnarray}
where $\int_{\mathbb{R}^{N}}|x|^{q}|U|^{\frac{4s}{N}+2}dx<+\infty$ by Remark \ref{Re1.1}.
Hence, it follows from \eqref{4.4} and \eqref{4.6} that
\begin{eqnarray}\label{eq4.4}
   \nonumber&& \int_{\mathbb{R}^{N}}|(-\Delta)^{\frac{s}{2}}u_{t}(x)|^{2}dx
   -a\frac{N}{N+2s}\int_{\mathbb{R}^{N}}m(x)|u_{t}(x)|^{\frac{4s}{N}+2}dx  \\ \\
   \nonumber&\leq& t^{2s}\frac{N}{2s}\frac{a_s^{*}-a}{a_s^{*}}+
   \frac{1}{t^{q-2s}}\frac{N}{N+2s}\frac{\bar{C}+o(1)}{{a_s^{*}}^{\frac{N}{2s}}}
   \int_{\mathbb{R}^{N}}|x|^{q}|U|^{\frac{4s}{N}+2}dx,
\end{eqnarray}
where $o(1)$ is a quantity which goes to 0 as $k \rightarrow +\infty$

In order to get an optimal upper bound for $I_1(a_k)$, we need to consider the following three cases based on $l={\rm min}\{q-2s,p\}$, respectively.

\textbf{Case 1}: If $0<q-2s<p$, then $l=q-2s$. Using \eqref{eq4.3} and \eqref{eq4.4}, we see that, for any $t>0$ large enough,
\begin{equation}\label{eq4.5}
  I_{1}(a_k)\leq t^{2s}\frac{N}{2s}\frac{a_s^{*}-a_k}{a_s^{*}}
  +t^{-l}\frac{N}{N+2s}\frac{\bar{C}+o(1)}{{a_s^{*}}^{\frac{N}{2s}}}\Gamma_{1},
\end{equation}
where $\Gamma_1$ is defined by  \eqref{eq1.13}. Taking
\begin{equation}\label{eq4.6}
  t=\Big(\frac{l\bar{C}\Gamma_{1}}
  {(N+2s)(a_s^{*}-a_k){a_s^{*}}^{\frac{N-2s}{2s}}}\Big)^{\frac{1}{l+2s}},
\end{equation}
we know that \eqref{eq4.1} holds by \eqref{eq4.5} .

\textbf{Case 2}: If $0<p<q-2s$, then  $l=p$.  Based on \eqref{eq4.3} and \eqref{eq4.4}, we know
that, for any $t>0$ large enough,
\begin{equation*}
    I_{1}(a_k)\leq t^{2s}\frac{N}{2s}\frac{a_s^{*}-a_k}{a_s^{*}}
  +t^{-l}\frac{C_{0}+o(1)}{{a_s^{*}}^{\frac{N}{2s}}}\Gamma_{2},
\end{equation*}
where $\Gamma_2$ is defined by  \eqref{eq1.13}. Then  \eqref{eq4.1} can be obtained by taking
\begin{equation*}
  t=\Big(\frac{lC_{0}\Gamma_{2}}
  {N(a_s^{*}-a_k){a_s^{*}}^{\frac{N-2s}{2s}}}\Big)^{\frac{1}{l+2s}}.
\end{equation*}

\textbf{Case 3}: If $q-2s=p$, then  $l=p=q-2s$. By \eqref{eq4.3} and \eqref{eq4.4}, we have, for any $t>0$ large enough,
\begin{equation*}
   I_{1}(a_k)\leq t^{2s}\frac{N}{2s}\frac{a_s^{*}-a_k}{a_s^{*}}
   +t^{-l}\frac{1}{{a_s^{*}}^{\frac{N}{2s}}}\Big[\frac{N\big(\bar{C}+o(1)\big)}{N+2s}\Gamma_{1}
   +\big(C_{0}+o(1)\big)\Gamma_{2}\Big].
\end{equation*}
Then, \eqref{eq4.1} follows by taking
\begin{equation*}
  t=\Big(\frac{l\bar{C}\Gamma_{1}}{(N+2s)(a_s^{*}-a_k){a_s^{*}}^{\frac{N-2s}{2s}}}
  +\frac{lC_{0}\Gamma_{2}}{N(a_s^{*}-a_k){a_s^{*}}^{\frac{N-2s}{2s}}}\Big)
  ^{\frac{1}{l+2s}}.
\end{equation*}
This completes the proof.
\end{proof}\\

Finally, we turn to proving our main Theorem \ref{Th1.3}.\\

{\noindent\textbf{Proof of Theorem \ref{Th1.3}.} }
The same as in the  proof of Theorem \ref{Th1.2}, we denote
\begin{equation}\label{eq4.11}
  u_{k}:=u_{a_{k}}(x),\  \epsilon_{k}:=\epsilon_{a_{k}},\ \bar{z}_{k}:=\bar{z}_{a_{k}}\text{ and }\bar{w}_{k}(x):=\epsilon^{\frac{N}{2}}_{k}u_{k}(\epsilon_{k}x+\bar{z}_{k}),
\end{equation}
where $\epsilon_{a_{k}}$ and $\bar{z}_{a_{k}}$  are defined as Lemma \ref{le3.1}. Since \eqref{eq1.11}, we have
\begin{equation}\label{eq4.12}
  \bar{w}_{k}(x)\overset{k\rightarrow \infty}\longrightarrow\frac{\big(\frac{2s}{N}\big)^{\frac{N}{4s}}
  U\Big(\big(\frac{2s}{N}\big)^{\frac{1}{2s}}(x+z_{0})\Big)}
  {\|U\|_{2}} \ \text{a.e. in}\ x\in\mathbb{R}^{N}.
 \end{equation}
  It follows from  \eqref{eq1.9} and \eqref{eq4.11}   that
 \begin{eqnarray}\label{eq4.140}
  \nonumber I_{1}(a_{k})&=&\Big(
  \int_{\mathbb{R}^{N}}|(-\Delta)^{\frac{s}{2}}u_k|^{2}dx
  -\frac{a_s^{*}N}{N+2s}\int_{\mathbb{R}^{N}}|u_k|^{\frac{4s}{N}+2}dx\Big)+\int_{\mathbb{R}^{N}}V(x)|u_k|^{2}dx\nonumber\\
  &\ &+\frac{N}{N+2s}\Big[(a^*-a_{k})\int_{\mathbb{R}^{N}}|u_k|^{\frac{4s}{N}+2}dx+a_{k}
  \int_{\mathbb{R}^{N}}\big(1-m(x)\big)|u_k|^{\frac{4s}{N}+2}dx\Big]\nonumber\\
  \nonumber&\geq&\frac{a_s^{*}-a_{k}}{\epsilon^{2s}_{k}}\frac{N}{N+2s}
  \int_{\mathbb{R}^{N}}|\bar{w}_k|^{\frac{4s}{N}+2}dx
  +\int_{\mathbb{R}^{N}}V(\epsilon_{k}x+\bar{z}_k)|\bar{w}_k|^{2}dx  \\
   &\ &+\frac{a_{k}}{\epsilon^{2s}_{k}}\frac{N}{N+2s}
                 \int_{\mathbb{R}^{N}}\big[1-m(\epsilon_{k}x+\bar{z}_k)\big] |\bar{w}_k|^{\frac{4s}{N}+2}dx\nonumber\\
                 &=&\frac{a_s^{*}-a_{k}}{\epsilon^{2s}_{k}}\frac{N}{N+2s}
  \int_{\mathbb{R}^{N}}|\bar{w}_k|^{\frac{4s}{N}+2}dx+A_{k}\epsilon_{k}^{p}+\frac{Na_{k}}{N+2s}B_{k}\epsilon_{k}^{q-2s},
\end{eqnarray}
 where
 \begin{equation}\label{eq4.14}
  A_{k}:=\int_{\mathbb{R}^{N}}\frac{V(\epsilon_{k}x+\bar{z}_{k})}{|\epsilon_{k}x+\bar{z}_{k}-x_{0}|^{p}}
                \big|x+\frac{\bar{z}_{k}-x_{0}}{\epsilon_{k}}\big|^{p}|\bar{w}_{k}(x)|^{2}dx,
\end{equation}
 and
 \begin{equation}\label{eq4.16}
  B_{k}:=\int_{\mathbb{R}^{N}}\frac{1-m(\epsilon_{k}x+\bar{z}_{k})}{|\epsilon_{k}x+\bar{z}_{k}-x_{0}|^{q}}
          \big|x+\frac{\bar{z}_{k}-x_{0}}{\epsilon_{k}}\big|^{q}|\bar{w}_{k}(x)|^{\frac{4s}{N}+2}dx,
\end{equation}
$x_{0}$ is given by $(V_{3})$. Using  \eqref{eq1.9}, \eqref{eq1.10}, \eqref{3.24} and \eqref{eq3.25} we see that
\begin{eqnarray}\label{eq4.19}
   &\ &\lim_{k\to\infty}\int_{\mathbb{R}^{N}}|\bar{w}_{k}|^{\frac{4s}{N}+2}dx=
\frac{2s}{N}\|U\|_{2}^{-\frac{4s}{N}-2}\int_{\mathbb{R}^{N}}\big|
  U\big|^{\frac{4s}{N}+2}dx =\frac{N+2s}{a_s^{*}N}.
\end{eqnarray}
Therefore,  \eqref{eq4.140} and \eqref{eq4.19} give that
\begin{equation}\label{eq4.21}
  I_{1}(a_{k})\geq \frac{a_s^{*}-a_{k}}{a_s^{*}}\big(1+o(1)\big)\epsilon_{k}^{-2s}+A_{k}\epsilon_{k}^{p}+\frac{Na_{k}}{N+2s}B_{k}\epsilon_{k}^{q-2s},
\end{equation}
where $o(1)$ denotes a quantity which goes to 0 as $k \rightarrow +\infty$.

\vskip.1truein

We \textbf{claim} that
\begin{equation}\label{eq4.17}
  \lim_{k\rightarrow \infty}\big|\frac{\bar{z}_{k}-x_{0}}{\epsilon_{k}}\big|<\infty,
\end{equation}
i.e., $\big\{\frac{\bar{z}_{k}-x_{0}}{\epsilon_{k}}\big\}\subset \mathbb{R}^{N}$ is a bounded sequence. By contradiction, if \eqref{eq4.17} is false, then, $\lim\limits_{k\rightarrow \infty}\big|\frac{\bar{z}_{k}-x_{0}}{\epsilon_{k}}\big|=\infty$.
Since $\epsilon_{k}\overset{k}\rightarrow 0$ and $\bar{z}_{k}\overset{k}\rightarrow x_{0}$, it then follows from  $(V_{4})$ and $(M_{2})$ that
\begin{equation}\label{eq4.18}
  \lim_{k\rightarrow\infty} \frac{V(\epsilon_{k}x+\bar{z}_{k})}{|\epsilon_{k}x+\bar{z}_{k}-x_{0}|^{p}}=C_{0} \text{ and }
  \lim_{k\rightarrow\infty} \frac{1-m(\epsilon_{k}x+\bar{z}_{k})}{|\epsilon_{k}x+\bar{z}_{k}-x_{0}|^{q}}=\overline{C}\quad \text{for a.e.}\ x\in \mathbb{R}^{N}.
\end{equation}
 By Fatou's lemma,  \eqref{eq4.12}  and \eqref{eq4.18}  imply that
\begin{equation}\label{eq4.190}
  \lim_{k\rightarrow \infty} A_{k}= \lim_{k\rightarrow \infty}B_{k}=+\infty.
\end{equation}
Similar to the proof of the Lemma \ref{Le4.1}, it follows from \eqref{eq4.190} that
\begin{equation}\label{eq4.22}
  I_{1}(a_{k})\geq C(A_{k}, B_{k})(a_s^{*}-a_{k})^{\frac{l}{l+2s}},
\end{equation}
where $C(A_{k}, B_{k})>0$ depends on $A_{k}$ and $B_{k}$, and  $C(A_{k}, B_{k})\rightarrow\infty$ as $A_{k},\ B_{k}\rightarrow\infty$, which contradicts Lemma \ref{Le4.1}. Thus \eqref{eq4.17} is proved.

 As a consequence of   \eqref{eq4.17}, up to a subsequence, we may assume
\begin{equation}\label{eq4.23}
  \lim_{k\rightarrow \infty}\frac{\bar{z}_{k}-x_{0}}{\epsilon_{k}}=y_{0},\ \text{for some}\ y_{0}\in \mathbb{R}^{N}.
\end{equation}
Using \eqref{eq4.12}, \eqref{eq4.14}, \eqref{eq4.18} and Fatou's lemma, it is easy  to see that
\begin{eqnarray}\label{eq4.24}
\nonumber \lim_{k\rightarrow \infty}A_{k} &\geq& C_{0}\|U\|_{2}^{-2}\big(\frac{2s}{N}\big)^{\frac{N}{2s}}
                                      \int_{\mathbb{R}^{N}}|x+y_{0}|^{p}\big|U\big((\frac{2s}{N})^{\frac{1}{2s}}(x+z_{0})\big)\big|^{2}dx \\
\nonumber   &=&C_{0}\|U\|_{2}^{-2} \int_{\mathbb{R}^{N}} \big|(\frac{2s}{N})^{-\frac{1}{2s}}x-z_{0}+y_{0}\big|^{p}|U(x)|^{2}dx  \\
   &\geq& C_{0}\big(\frac{2s}{N}\big)^{-\frac{p}{2s}}(a_s^{*})^{-\frac{N}{2s}}\int_{\mathbb{R}^{N}}|x|^{p}|U(x)|^{2}dx,
\end{eqnarray}
where, the last inequality becomes equality if and only if $z_{0}=y_{0}$.

Similarly,
\begin{eqnarray}\label{eq4.25}
\nonumber \lim_{k\rightarrow \infty}B_{k} &\geq& \bar{C}\|U\|_{2}^{-\frac{4s}{N}-2}\big(\frac{2s}{N}\big)^{\frac{N}{2s}+1}
  \int_{\mathbb{R}^{N}}|x+y_{0}|^{q}\big|U\big((\frac{2s}{N})^{\frac{1}{2s}}(x+z_{0})\big)\big|^{\frac{4s}{N}+2}dx \\
   &\geq& \bar{C}\big(\frac{2s}{N}\big)^{-\frac{q-2s}{2s}}(a_s^{*})^{-\frac{N+2s}{2s}}
    \int_{\mathbb{R}^{N}}|x|^{q}|U(x)|^{\frac{4s}{N}+2}dx,
\end{eqnarray}
where, the last inequality becomes equality if and only if $z_{0}=y_{0}$.

Therefore, it follows from  \eqref{eq4.21}, \eqref{eq4.24} and \eqref{eq4.25} that,  for $k$ large enough,
\begin{eqnarray}\label{eq4.26}
\nonumber I_{1}(a_{k})&\geq&  \frac{a_s^{*}-a_{k}}{a_s^{*}}\big(1+o(1)\big)\epsilon_{k}^{-2s}+
                               C_{0}\big(1+o(1)\big)\big(\frac{2s}{N}\big)^{-\frac{p}{2s}}{a_s^{*}}^{-\frac{N}{2s}}
                               \epsilon_{k}^{p}\int_{\mathbb{R}^{N}}|x|^{p}|U(x)|^{2}dx \\
&&+\frac{N}{N+2s}\bar{C}\big(1+o(1)\big)\big(\frac{2s}{N}\big)^{-\frac{q-2s}{2s}}{a_s^{*}}^{-\frac{N}{2s}}\epsilon_{k}^{q-2s}
   \int_{\mathbb{R}^{N}}|x|^{q}|U(x)|^{\frac{4s}{N}+2}dx.
\end{eqnarray}
Since Lemma \ref{Le4.1},   to prove Theorem \ref{Th1.3} $\bf (i)$ we need to have a precise lower bound for $I_{1}(a_{k})$.  
Note that, in (\ref{eq4.26}),  $a_s^{*}-a_{k} \rightarrow 0$ and $\epsilon_{k} \rightarrow 0$, as $k\rightarrow \infty$. Then, in order to get a right lower bound for  (\ref{eq4.26}),  we should consider the following three cases based on  $l=\min\{q-2s,p\}$,
as in the proof of Lemma \ref{Le4.1}.

$\bullet$   $0<q-2s<p$,  that is,  $l=q-2s$:
In this case, for $k$ large enough, we may keep only  two  dominant  terms (i.e., the first  and the third terms) in \eqref{eq4.26}, then, by using the definitions in \eqref{eq1.13},  we have
\begin{eqnarray}\label{eq4.15a}
\nonumber I_{1}(a_{k})&\geq&  \frac{a_s^{*}-a_{k}}{a_s^{*}}\big(1+o(1)\big)\epsilon_{k}^{-2s}
       +\frac{N}{N+2s}\bar{C}\big(1+o(1)\big)\big(\frac{2s}{N}\big)^{-\frac{l}{2s}}{a_s^{*}}^{-\frac{N}{2s}}\Gamma_{1}\epsilon_{k}^{l}\\
&\geq&(1+o(1)) \frac{l+2s}{l}\big(\frac{l\gamma}{2s}\big)^{\frac{2s}{l+2s}}
  {a_s^{*}}^{-\frac{N+l}{l+2s}}\big(\frac{N+2s}{2s}(a_s^{*}-a_{k})\big)^{\frac{l}{l+2s}},
\end{eqnarray}
where $\gamma=\big(\frac{N}{N+2s}\big)^{\frac{l+2s}{2s}}\bar{C}\Gamma_{1}$, and  the last inequality becomes equality if and only if $\epsilon_{k}$ takes the value:
\begin{equation}\label{eq4.50a}
  \sigma_{k}:=\big(\frac{N+2s}{l \bar{C}\Gamma_{1} }\big)^{\frac{1}{l+2s}} \big(\frac{2s}{N}\big)^{\frac{1}{2s}} \big(a^{*}_{s}\big)^{\frac{N-2s}{2s(l+2s)}}
    (a^{*}_{s}-a_{k})^{\frac{1}{l+2s}}.
\end{equation}
Then, combinning Lemma \ref{Le4.1} , Theorem \ref{Th1.3} $\bf (i)$  is proved in this case.\\

$\bullet$ $0<p<q-2s$,  that is $l=p$:
In this case, for $k$ large enough, we may keep only  the first  and the second terms  in \eqref{eq4.26}, that is,
\begin{eqnarray}\label{eq4.15b}
\nonumber I_{1}(a_{k})&\geq&  \frac{a_s^{*}-a_{k}}{a_s^{*}}\big(1+o(1)\big)\epsilon_{k}^{-2s}
       +C_{0}\big(1+o(1)\big)\big(\frac{2s}{N}\big)^{-\frac{l}{2s}}{a_s^{*}}^{-\frac{N}{2s}}\Gamma_{2}\epsilon_{k}^{l}\\
&\geq&(1+o(1)) \frac{l+2s}{l}\big(\frac{l\gamma}{2s}\big)^{\frac{2s}{l+2s}}
  {a_s^{*}}^{-\frac{N+l}{l+2s}}\big(\frac{N+2s}{2s}(a_s^{*}-a_{k})\big)^{\frac{l}{l+2s}},
\end{eqnarray}
where $\gamma=\big(\frac{N}{N+2s}\big)^{\frac{l}{2s}}C_{0}\Gamma_{2}$, and  ``=" holds in the last inequality if and only if $\epsilon_{k}$ takes the value:
\begin{equation}\label{eq4.50b}
  \sigma_{k}:=\big(\frac{N}{l C_{0}\Gamma_{2} }\big)^{\frac{1}{l+2s}} \big(\frac{2s}{N}\big)^{\frac{1}{2s}} \big(a^{*}_{s}\big)^{\frac{N-2s}{2s(l+2s)}}
    (a^{*}_{s}-a_{k})^{\frac{1}{l+2s}},
\end{equation}
which then gives Theorem \ref{Th1.3} $\bf (i)$ by using Lemma \ref{Le4.1}.\\

  $\bullet$  $p=q-2s$, that is, $l=p=q-2s$:
   In this case, for $k$ large enough, we have,
\begin{eqnarray}\label{eq4.15c}
\nonumber I_{1}(a_{k})&\geq&  \frac{a_s^{*}-a_{k}}{a_s^{*}}\big(1+o(1)\big)\epsilon_{k}^{-2s}+
                               C_{0}\big(1+o(1)\big)\big(\frac{2s}{N}\big)^{-\frac{l}{2s}}{a_s^{*}}^{-\frac{N}{2s}}
                               \Gamma_{2}\epsilon_{k}^{l}\\
\nonumber &&+\frac{N}{N+2s}\bar{C}\big(1+o(1)\big)\big(\frac{2s}{N}\big)^{-\frac{l}{2s}}{a_s^{*}}^{-\frac{N}{2s}}\Gamma_{1}\epsilon_{k}^{l}\\
&\geq&(1+o(1)) \frac{l+2s}{l}\big(\frac{l\gamma}{2s}\big)^{\frac{2s}{l+2s}}
  {a_s^{*}}^{-\frac{N+l}{l+2s}}\big(\frac{N+2s}{2s}(a_s^{*}-a_{k})\big)^{\frac{l}{l+2s}},
\end{eqnarray}
where $ \gamma=\big(\frac{N}{N+2s}\big)^{\frac{l+2s}{2s}}\bar{C}\Gamma_{1}
  +\left(\frac{N}{N+2s}\right)^{\frac{l}{2s}}C_{0}\Gamma_{2}$, and ``=" holds in  the last inequality if and only if $\epsilon_{k}$ takes the value:
\begin{equation}\label{eq4.50c}
  \sigma_{k}:=\big(\frac{l \bar{C}\Gamma_{1}}{N+2s}+\frac{l C_{0}\Gamma_{2}}{N}\big)^{-\frac{1}{l+2s}} \big(\frac{2s}{N}\big)^{\frac{1}{2s}} \big(a^{*}_{s}\big)^{\frac{N-2s}{2s(l+2s)}}
    (a^{*}_{s}-a_{k})^{\frac{1}{l+2s}},
\end{equation}
which also gives  \eqref{eq1.14}  by Lemma \ref{Le4.1},  that is,  Theorem \ref{Th1.3} $\bf (i)$ is proved.\\

   In order to prove Theorem \ref{Th1.3} $\bf (ii)$, that is, $\epsilon_k>0$ satisfies  \eqref{eq1.15}, it is enough to show that
\begin{equation}\label{eq4.51}
  \lim_{k\rightarrow \infty} {\epsilon_{k}}/{\sigma_{k}}=1.
\end{equation}
   By the above discussions for $\sigma_{k}$ in \eqref{eq4.50a},  \eqref{eq4.50b} and \eqref{eq4.50c}, 
   we need to show \eqref{eq4.51} by three cases accordingly. Here, we consider only the case: $0<q-2s<p$,  i.e.,  $l=q-2s$. The other two cases can be done similarly.

Indeed, if \eqref{eq4.51} is false, then,  passing to a subsequence if necessary, we may assume that
\begin{equation*}
  \lim_{k\rightarrow \infty} {\epsilon_{k}}/{\sigma_{k}}=\theta\neq 1,\ \mbox{with}\ 0\leq\theta\leq\infty.
\end{equation*}
Using \eqref{eq4.15a}, for $k>0$ large enough,  we have
\begin{eqnarray*}
   I_{1}(a_{k})&\geq& \frac{a_s^{*}-a_{k}}{a_s^{*}}\big(1+o(1)\big)(\theta \sigma_{k})^{-2s}
       +\frac{N}{N+2s}\bar{C}\big(1+o(1)\big)\big(\frac{2s}{N}\big)^{-\frac{l}{2s}}{a_s^{*}}^{-\frac{N}{2s}}\Gamma_{1}(\theta \sigma_{k})^{l} \\
    &\geq& \big(1+o(1)\big)N\big(\frac{l \bar{C}\Gamma_{1}}{N+2s}\big)^{\frac{2s}{l+2s}}
           {a_s^{*}}^{-\frac{N+l}{l+2s}} (a_s^{*}-a_{k})^{\frac{l}{l+2s}}\big(\frac{1}{2s}\theta^{-2s}+\frac{1}{l}\theta^{l}\big)\\
    &>& (1+o(1)) \frac{l+2s}{l}\big(\frac{l\gamma}{2s}\big)^{\frac{2s}{l+2s}}
        {a_s^{*}}^{-\frac{N+l}{l+2s}}\big(\frac{N+2s}{2s}(a_s^{*}-a_{k})\big)^{\frac{l}{l+2s}}, \text{ if } \theta \neq 1,
\end{eqnarray*}
which contradicts Lemma \ref{Le4.1}, and \eqref{eq4.51} is true. Then,  we prove Theorem \ref{Th1.3} $\bf (ii)$. 

{\hfill $\Box$}\\

\end{document}